\title{The conjugate gradient method with various viewpoints\thanks{The research was supported by
the National Natural Science Foundation of China (11171051) and the Fundamental Research Funds for the Central Universities (DUT18RC(4)067).}}
\author{Xuping Zhang\thanks{School of Mathematical Sciences,
Dalian University of Technology, Dalian, Liaoning 116025, P. R.
China ({\tt zhangxp@dlut.edu.cn, jiefei.yang01@gmail.com, zinglzy@163.com}).}
        \and Jiefei Yang$^\dag$
        \and Ziying Liu$^\dag$}
\begin{document}

\maketitle

\begin{abstract}
Connections of the conjugate gradient (CG) method with other methods in computational mathematics are surveyed, including the connections with the conjugate direction method, the subspace optimization method and the quasi-Newton method BFGS in numrical optimization, and the Lanczos method in numerical linear algebra. Two sequences of polynomials related to residual vectors and conjugate vectors are reviewed, where the residual polynomials are similar to orthogonal polynomials in the approximation theory and the roots of the polynomials reveal certain information of the coefficient matrix. The convergence rates of the steepest descent and CG are reconsidered in a viewpoint different from textbooks. The connection of infinite dimensional CG with finite dimensional preconditioned CG is also reviewed via numerical solution of an elliptic equation.
\end{abstract}

\begin{keywords}
conjugate gradient method, subspace optimization, BFGS, Lanczos iteration, orthogonal polynomials, steepest descent method
\end{keywords}

\begin{AMS}
65F05, 65F08, 65F10, 65F15
\end{AMS}

\pagestyle{myheadings} \thispagestyle{plain} \markboth{XUPING ZHANG,
JIEFEI YANG AND ZIYING LIU}{A REVIEW ON CONJUGATE GRADIENT METHOD}

\section{Introduction}
\label{sec:introduction}

% 1. motivation

The conjugate gradient method, proposed by Hestenes and Stiefel \cite{Hestenes-Stiefel}, is an effective method for solving linear system
\begin{align}
\label{eqn:Axb}
Ax=b,
\end{align}
where $A$ is symmetric positive definite. In the original paper \cite{Hestenes-Stiefel}, it is shown that the iterates in CG possess neat properties and that CG has connections with certain mathematical objects, such as orthogonal polynomials and continued fractions. From then on, many aspects of CG were explored with a large number of literatures, including the convergence rate \cite{convergencerate-VanderSluis,convergencerate-Sleijpen,convergencerate-Greenbaum}, preconditioners \cite{preconditon-Johnson,preconditon-Chan}, the related polynoials \cite{rootspolynomials-Manteuffel} and CG in Hilbert space \cite{CG-hilbert-Herzog-Sachs} and so on. Also, there are many works on exploring connections between CG and other algorithms, such as the quasi-Newton method BFGS \cite{BFGS-Nazareth,BFGS-Forsgren-Odland} and Lanczos method \cite{Lanczos-Meurant}. Such connections are interesting to the authors and become the main motivation of this paper. The purpose of this paper is to provide a further reading material for CG in textbooks.

%% 2. choice of topics
%
%There are many topics can be chosen to be further reading materials. The principle of choice of topics in this paper is that the chosen topics should inspire curiosity, or should be aesthetic, or should arouse different viewpoints. Since round off error is not our concern, exact arithmetic is assumed in this paper. The connections between CG and other algorithms, and the connection of infinite dimensional CG with finite dimensional preconditioned CG, seem amazing. The duality between residual polynoimals and $n$-dimension geometry seems beatiful. To the best of our knowledge, the convergence analysis of the steepest descent is not seen in the literature.

% 3. writing style

In order to be a friendly further reading material for textbooks, the exposition
of the paper is made as detailed and easy to understand as possible. Usually claims are derived starting from elementary calculations. Only basic calculas, basic numerical linear algebra and basic Hilbert space theory are required as preliminary.

\section{Connections with other methods}
\label{set:connections}
There are several ways to derive CG, offering different viewpoints. In this paper, CG will be derived by the conjugate direction method, which is often done in textbooks. Connections of CG with other iterative methods are demonstrated by the fact that with the same initial guess, the iterates of CG are identical to the iterates generated by these methods.

\subsection{From the viewpoint of optimization}
Given a symmetric positive definite matrix $A$, solving $Ax=b$ is equivalent to minimizing $J(x)=\frac{1}{2}x^TAx-b^Tx$. Suppose there is an initial guess $x_0$. The corresponding residual $r_0=-\nabla J(x)=b-Ax_0$.
\begin{proposition}
At $x_k$, if $ J(x)$ is marching along the direction $d_k$, then optimal step size $\alpha_k$ and the quantity of descent is
\begin{align}
\label{eqn:descent-quantity}
\alpha_k = \frac{r_k^Td_k}{d_k^TAd_k},\quad
J(x_{k+1}) = J(x_k) - \frac{\left(r_k^Tp_k\right)^2}{2p_k^TAp_k},
\end{align}
where $r_k=b-Ax_k$ is the residual at $x_k$.
\end{proposition}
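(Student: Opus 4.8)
The plan is to reduce the problem to a one–dimensional minimization of a quadratic in the step size. Since we march from $x_k$ along $d_k$, every candidate iterate has the form $x_k + \alpha d_k$, and I would define the scalar function
\begin{align}
\label{eqn:phi-def}
\phi(\alpha) = J(x_k + \alpha d_k).
\end{align}
The first step is to expand $\phi(\alpha)$ using the explicit quadratic form $J(x)=\tfrac12 x^TAx - b^Tx$ and the symmetry of $A$. After collecting terms in powers of $\alpha$, I expect the cross term to combine into $(Ax_k - b)^Td_k$, which is exactly $-r_k^Td_k$ by the definition $r_k = b - Ax_k$. This yields the clean expression
\begin{align}
\label{eqn:phi-expanded}
\phi(\alpha) = J(x_k) - \alpha\, r_k^Td_k + \tfrac12\,\alpha^2\, d_k^TAd_k,
\end{align}
so the only nonroutine bookkeeping is getting the sign right when passing from $\nabla J$ to the residual.

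Next I would minimize \eqref{eqn:phi-expanded} over $\alpha$. Because $A$ is symmetric positive definite we have $d_k^TAd_k > 0$, so $\phi$ is a strictly convex parabola and its unique minimizer is found by setting $\phi'(\alpha)=0$. Solving $-r_k^Td_k + \alpha\, d_k^TAd_k = 0$ gives the claimed optimal step
\begin{align}
\label{eqn:alpha-opt}
\alpha_k = \frac{r_k^Td_k}{d_k^TAd_k},
\end{align}
which is the first assertion. The positive definiteness is what guarantees this critical point is a genuine minimum rather than a maximum or saddle, so I would invoke it explicitly here.

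Finally, to obtain the quantity of descent I would substitute \eqref{eqn:alpha-opt} back into \eqref{eqn:phi-expanded}. The two $\alpha$–dependent terms contribute $-\tfrac{(r_k^Td_k)^2}{d_k^TAd_k}$ and $+\tfrac12\tfrac{(r_k^Td_k)^2}{d_k^TAd_k}$ respectively, which combine to a single term of magnitude one half, giving
\begin{align}
\label{eqn:descent-final}
J(x_{k+1}) = J(x_k) - \frac{\left(r_k^Td_k\right)^2}{2\, d_k^TAd_k}.
\end{align}
This matches the stated formula once one identifies the direction labels $p_k$ and $d_k$, which I read as the same search vector. I do not anticipate any genuine obstacle: the entire argument is elementary calculus on a scalar quadratic, and the only points demanding care are the sign convention relating $r_k$ to $\nabla J$ and the use of positive definiteness to justify the minimization.
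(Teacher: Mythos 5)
Your proof is correct and is precisely the elementary argument the paper relies on (the proposition is stated there without proof): expand $\phi(\alpha)=J(x_k+\alpha d_k)$ as a scalar quadratic, use $d_k^TAd_k>0$ from positive definiteness to minimize, and substitute the optimal $\alpha_k$ back to get the descent quantity. You are also right to read the $p_k$ appearing in the paper's displayed descent formula as a typo for the search direction $d_k$; your \eqref{eqn:descent-final} is the consistent form.
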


\subsubsection{Connection with conjugate direction method}
The CG method can be considered as a special case of conjugate direction method, just as mentioned in the original paper
\cite{Hestenes-Stiefel}. The first conjugate direction is taken as the steepest descent direction $r_0$, i.e., $p_0=r_0$. Then
\begin{align}
\label{eqn:x1}
x_1=x_0+\alpha_0 p_0,\quad \alpha_0=\frac{r_0^Tp_0}{p_0^TAp_0},\\
\label{eqn:r1}
r_1=b-Ax_1=r_0-\alpha_0Ap_0.
\end{align}
At $x_1$, instead of marching along the steepest descent direction $r_1$, a direction $p_1$ conjugate to $p_0$ is constructed, based on the information at hand. That is
\begin{align}
p_1=r_1+\beta_0 p_0,\quad \beta_0=-\frac{p_0^TAr_1}{p_0^TAp_0}.
\end{align}
Then
\begin{align}
x_2=x_1+\alpha_1 p_1,\quad \alpha_1=\frac{r_1^Tp_1}{p_1^TAp_1},\\
r_2=b-Ax_2=r_1-\alpha_1Ap_1.
\end{align}
Repeating this process, at $x_k$, gives the CG algorithm,
\begin{align}
\label{eqn:cd-iter-1}
& p_k=r_k+\beta_{k-1} p_{k-1}, \quad \beta_{k-1}=-\frac{p_{k-1}^TAr_k}{p_{k-1}^TAp_{k-1}},\\
\label{eqn:cd-iter-2}
& x_{k+1}=x_k+\alpha_k p_k, \quad \alpha_k=\frac{r_k^Tp_k}{p_k^TAp_k},\\
\label{eqn:cd-iter-3}
& r_{k+1}=b-Ax_k=r_k-\alpha_kAp_k.
\end{align}
The resulting directions $r_0,r_1,\dots,r_k$ are orthoganal to each other, and $p_0,p_1,\dots,p_k$ are conjugate to each other. By the orthogonal conditions and the conjugate conditions, there are alternative formulas for $\alpha_k$ and $\beta_k$,
\begin{align}
\label{eqn:cd-beta-alternative}
\beta_{k-1} & =-\frac{p_{k-1}^TAr_k}{p_{k-1}^TAp_{k-1}}=-\frac{p_{k-1}^TAr_k}{p_{k-1}^TAr_{k-1}}=-\frac{(r_{k-1}-r_k)^Tr_k}{(r_{k-1}-r_k)^Tr_{k-1}}=\frac{r_k^Tr_k}{r_{k-1}^Tr_{k-1}},\\
\label{eqn:cd-alpha-alternative}
\alpha_k & =\frac{r_k^Tr_k}{p_k^TAp_k}=\frac{p_k^Tr_k}{p_k^TAp_k}=\frac{p_k^Tr_0}{p_k^TAp_k}.
\end{align}
If $x_0=0$ is chosen, then $r_0=b$ and $\alpha_k=\frac{p_k^Tb}{p_k^TAp_k}$. And if the process terminates at $k=n$, then
\begin{align}
\label{eqn:cd-exact-solution}
x = \alpha_0 p_0 + \alpha_1 p_1 + \ldots + \alpha_{n-1} p_{n-1} = \sum_{i=0}^{n-1}\frac{p_i^Tb}{p_i^TAp_i}p_i = \left(\sum_{i=0}^{n-1}\frac{p_i p_i^T}{p_i^TAp_i}\right) b.
\end{align}
Therefore an explicit expression of $A^{-1}$ is resulted
\begin{align}
\label{eqn:cd-A-inverse}
A^{-1} = \sum_{i=0}^{n-1}\frac{p_i p_i^T}{p_i^TAp_i}.
\end{align}

\subsubsection{Connection with subspace optimization}
The CG method can be considered as a two-dimensional subspace minimization method for the objective function $ J(x)$, see for example \cite{subspaceoptimization-Xu}. At the beginning of subspace minimization, there is only one direction $r_0$ availabe. Therefore we proceed as (\ref{eqn:x1})-(\ref{eqn:r1}) to obtain $r_1$. Now consider minimizing $ J(x)$ in the two-dimensional affine subspace $\pi_2=x_1+\mbox{span}\{r_1,r_0\}$. In order to solve the subproblem, let $x=x_1+\xi r_1+\eta r_0$ and consider the function $h(\xi,\eta)= J(x_1+\xi r_1+\eta r_0)$, where $(\xi,\eta)\in \mathbb{R}^2$. Forcing $\frac{\partial h}{\partial \xi}=0$ and $\frac{\partial h}{\partial \eta}=0$, gives
\begin{align}
\label{eqn:subspace-eqn1}
r_1^TAr_1\cdot\xi + r_1^TAr_0\cdot\eta & = r_1^Tr_1,\\
\label{eqn:subspace-eqn2}
r_1^TAr_0\cdot\xi + r_0^TAr_0\cdot\eta & = 0,
\end{align}
where the orthoganality $r_1^Tr_0=0$ is used. From (\ref{eqn:subspace-eqn1})-(\ref{eqn:subspace-eqn2}), there holds
\begin{align}
\label{eqn:subspace-soln1}
\eta & = -\frac{r_1^TAr_0}{r_0^TAr_0}\xi,\\
\label{eqn:subspace-soln2}
\xi & = \frac{r_1^Tr_1}{r_1^TAr_1-(r_1^TAr_0)^2/r_0^TAr_0}.
\end{align}
Denote $\tilde{p}_1=r_1+\frac{\eta}{\xi}r_0=r_1-\frac{r_1^TAr_0}{r_0^TAr_0}r_0$. It is easy to see that $\tilde{p}_1=p_1$, the $p_1$ in the CG method. Thus $\tilde{p}_1^TAr_0=0$ as in the CG method. A little calculation shows that
\begin{align}
\xi & = \frac{r_1^Tr_1}{r_1^TAr_1-(r_1^TAr_0)^2/r_0^TAr_0}=\frac{r_1^Tp_1}{p_1^TAp_1}=\alpha_1.
\end{align}
where $\alpha_1$ is the step size in the CG method.
The minimizer of the subproblem is
\begin{align}
\nonumber
\tilde{x}_2 & = x_1 + \xi\left(r_1+\frac{\eta}{\xi}r_0\right) = x_1 + \alpha_1 p_1.
\end{align}
From the above formula, it can be seen that $\tilde{x}_2=x_2$ and that $x_2$ is the minimizer of $J(x)$ both in the two-dimensional affine subspace $\pi_2$ and in the direction $p_1$. The next residual vector arises
$$
r_2=b-Ax_2=r_1-\frac{r_1^Tp_1}{p_1^TAp_1}Ap_1,
$$
which satisfies orthogonal conditions $r_2^Tr_1=0$ and $r_2^Tp_1=0$. Now there are two options for the new two-dimensional affine subspace for $J(x)$ to be minimized, i.e., $\pi_2=x_1+\mbox{span}\{r_2,r_1\}$ and $\pi_2=x_1+\mbox{span}\{r_2,p_1\}$. Which subspace should be chosen? In order to answer this question, consider a more general subproblem, i.e., at $x_k$, to minimize $ J(x)$ in a two dimensional subspace $\mbox{span}\{u,v\}$, with orthogonal condition $u^Tv=0$,
\begin{align}
\label{eqn:general-subproblem}
\min_{\xi,\eta} J(x_k+\xi u+\eta v).
\end{align}
By similar arguments as the case $\pi_2=x_1+\mbox{span}\{r_1,r_0\}$, the following proposition can be derived.
\begin{proposition}
\label{proposi:general-subproblem-orth}
The minimizer $\tilde{x}_{k+1}=x_k+\xi u+\eta v$ of $J(x_k+\xi u+\eta v)$ is also the minimizer of $ J(x)$ at $x_k$ along the direction $\tilde{p} = u+\frac{\eta}{\xi}v$, with step size $\tilde{\alpha} = \xi$,
where
\begin{align}
\frac{\eta}{\xi} = -\frac{u^TAv}{v^TAv},\quad \xi =\frac{u^Tu}{p^TAp}.
\end{align}
Furthermore, the new residual $\tilde{r}_{k+1}=b-A\tilde{x}_{k+1}$ is orthogonal to $u$, $v$ and $\tilde{p}$ is conjugate to $v$, i.e.,
\begin{align}
\label{eqn:general-subproblem-orth}
\tilde{r}_{k+1}^Tu=0,\quad \tilde{r}_{k+1}^Tv=0,\quad \tilde{p}^TAv=0.
\end{align}
\end{proposition}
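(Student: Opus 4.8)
The plan is to treat $h(\xi,\eta)=J(x_k+\xi u+\eta v)$ as a strictly convex quadratic in $(\xi,\eta)$: since $A$ is symmetric positive definite and the orthogonal pair $u,v$ is linearly independent, the Hessian of $h$ is positive definite, so $h$ has a unique minimizer determined by $\partial h/\partial\xi=\partial h/\partial\eta=0$. Writing $r_k=b-Ax_k$ and using that here $u=r_k$ is the residual (steepest descent) direction at $x_k$ — so that the hypothesis $u^Tv=0$ reads $r_k^Tv=0$ while $r_k^Tu=u^Tu$ — I would expand $J$ and record the two normal equations, the analogues of (\ref{eqn:subspace-eqn1})--(\ref{eqn:subspace-eqn2}):
\begin{align*}
\xi\,u^TAu+\eta\,u^TAv=u^Tu,\qquad \xi\,u^TAv+\eta\,v^TAv=0.
\end{align*}

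The second equation gives $\eta/\xi=-u^TAv/v^TAv$ at once (valid because $v^TAv>0$), which is the first claimed formula, and with $\tilde p=u+\tfrac{\eta}{\xi}v$ it yields the conjugacy $\tilde p^TAv=u^TAv+\tfrac{\eta}{\xi}v^TAv=0$, i.e.\ the last relation in (\ref{eqn:general-subproblem-orth}). The \emph{decisive} observation is the identity $x_k+\xi u+\eta v=x_k+\xi\tilde p$, which recasts the two-parameter subspace search as a single line search along $\tilde p$ with step $\xi$; hence $\tilde x_{k+1}=x_k+\xi\tilde p$ and $\tilde\alpha=\xi$.

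To finish I would verify that this $\xi$ really is the optimal one-dimensional step by matching it against $\tilde\alpha=r_k^T\tilde p/\tilde p^TA\tilde p$ from (\ref{eqn:descent-quantity}): one computes $r_k^T\tilde p=u^Tu$ (using $u=r_k$ and $u^Tv=0$), while the conjugacy just proved collapses the denominator to $\tilde p^TA\tilde p=\tilde p^TAu=u^TAu+\tfrac{\eta}{\xi}u^TAv$, which by the first normal equation (divided by $\xi$) equals $u^Tu/\xi$; therefore $\xi=u^Tu/\tilde p^TA\tilde p$, simultaneously the second claimed formula and the identity $\tilde\alpha=\xi$. The orthogonalities $\tilde r_{k+1}^Tu=0$ and $\tilde r_{k+1}^Tv=0$ are then immediate: writing $\tilde r_{k+1}=r_k-\xi Au-\eta Av$ and pairing with $u$ and $v$ reproduces exactly the two normal equations, so both inner products vanish.

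All the computations are elementary; the only real care is bookkeeping, and the crux is the single step $\xi=u^Tu/\tilde p^TA\tilde p$, which is the one place where the normal equation and the conjugacy $\tilde p^TAv=0$ must be invoked together. Getting the denominator to collapse to $\tilde p^TAu$ via conjugacy is precisely what makes the subspace minimizer and the line minimizer coincide, so I expect that to be the main obstacle in writing the argument cleanly.
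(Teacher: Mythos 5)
Your proof is correct and takes essentially the same route as the paper, whose own proof of this proposition is just the remark that it follows ``by similar arguments'' from the explicit $k=1$ computation: the same two normal equations (the analogues of (\ref{eqn:subspace-eqn1})--(\ref{eqn:subspace-eqn2})), the ratio $\eta/\xi=-u^TAv/v^TAv$ yielding $\tilde p^TAv=0$, and the collapse of $\tilde p^TA\tilde p$ to $u^Tu/\xi$ via the first normal equation. If anything you are slightly more careful than the paper, since you make explicit the tacit hypothesis $u=r_k$ (so that $r_k^Tu=u^Tu$ and $r_k^Tv=0$), without which the stated formulas for $\eta/\xi$ and $\xi$ would not hold for arbitrary orthogonal $u,v$.
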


Applying Propositon \ref{proposi:general-subproblem-orth} at $x_2$, if $\pi_2=x_2+\mbox{span}\{r_2,r_1\}$ is chosen, i.e., $u=r_2,v=r_1$, then $\tilde{r}_3$ is orthogonal to $r_2$ and $r_1$ but not necessarily orthogonal to $r_0$, and $\tilde{p}_2$ is conjugate to $r_1$ rather to $p_1$. Hence, the works that we have done seems to be messed up.

If $\pi_2=x_2+\mbox{span}\{r_2,p_1\}$ is chosen, i.e., $u=r_2,v=p_1$, then $\tilde{r}_3$ is orthogonal to $r_2$ and $p_1$, and $\tilde{p}_2$ is conjugate to $p_1$. It can be verified that such $\tilde{p}_2$ is also conjugate to $p_0$ and therefore $\tilde{r}_3$ is orthogonal to $r_0$. In fact, with $u=r_2,v=p_1$
\begin{align}
\tilde{p}_2=r_2-\frac{p_{1}^TAr_1}{p_{1}^TAp_{1}} p_{1}=p_2,\\
\tilde{x}_{3}=x_2+\frac{r_2^Tp_2}{p_2^TAp_2} p_2=x_3,\\
\tilde{r}_{3}=b-Ax_3=r_2-\frac{r_2^Tp_2}{p_2^TAp_2}Ap_2=r_3,
\end{align}
where $p_2$, $x_3$ and $r_3$ are iterates of the CG method. Therefore $\tilde{p}_2$ is conjugate to $p_0$ and $\tilde{r}_3$ is orthogonal to $r_0$. These resulting properties seem satifying, because they gurantee that we are making progress.

Repeating this process, at $x_k$, the subspace $\pi_2=x_k+\mbox{span}\{r_k,p_{k-1}\}$ is taken, which results in
\begin{align}
\label{eqn:subspace-iter-1}
\tilde{p}_k=r_k-\frac{p_{k-1}^TAr_k}{p_{k-1}^TAp_{k-1}} p_{k-1},\\
\label{eqn:subspace-iter-2}
\tilde{x}_{k+1}=x_k+\frac{r_k^Tp_k}{p_k^TAp_k} p_k,\\
\label{eqn:subspace-iter-3}
\tilde{r}_{k+1}=b-Ax_k=r_k-\frac{r_k^Tp_k}{p_k^TAp_k}Ap_k,
\end{align}
which are the same iterates as $p_k$, $x_{k+1}$, $r_{k+1}$ in the CG method (\ref{eqn:cd-iter-1})-(\ref{eqn:cd-iter-3}).

\subsubsection{Connection with BFGS}
BFGS is a special quasi-Newton method for minimizing a function $h(x)$. Denote $g_k=\nabla h(x_k)$, $s_k=x_{k+1}-x_k$, and $y_k=g_{k}-g_{k+1}$. Given the initial data $x_0$, $H_0$, the BFGS method reads
\begin{align}
\label{eqn:bfgs-1}
d_k & = -H_k g_k,\\
\label{eqn:bfgs-2}
x_{k+1} & = x_k + \alpha_k d_k,\quad \alpha_k=\mbox{arg}\min_{\alpha}h(x_k+\alpha d_k),\\
\label{eqn:bfgs-3}
H_{k+1} & = H_{k}+\frac{1}{s_{k}^Ty_{k}}\left[1+\frac{y_{k}^TH_{k}y_{k}}{s_{k}^Ty_{k}}\right]s_{k}s_{k}^T - \frac{1}{s_{k}^Ty_{k}}(s_{k}y_{k}^TH_{k}+H_{k}y_{k}s_{k}^T).
\end{align}
When applied to the quadratic function $ J(x)$, with the same initial guess $x_0$ as the CG method, and with $H_0 = I$, BFGS will produce the same iterates as CG, maybe firstly discovered by Nazareth \cite{BFGS-Nazareth}. Noting that the residual $r=b-Ax=-\nabla  J(x)$, BFGS holds
\begin{align}
\hat{p}_0 & = -H_0 g_0 = r_0,\\
\hat{x}_{1} & = x_0 + \alpha_0 \hat{p}_0,\quad \alpha_0=\frac{r_0^T\hat{p}_0}{\hat{p}_0^TA\hat{p}_0},\\
H_{1} & = H_{0}+\frac{1}{s_{0}^Ty_{0}}\left[1+\frac{y_{0}^TH_{0}y_{0}}{s_{0}^Ty_{0}}\right]s_{0}s_{0}^T - \frac{1}{s_{0}^Ty_{0}}(s_{0}y_{0}^TH_{0}+H_{0}y_{0}s_{0}^T).
\end{align}
Thus $\hat{x}_1$ is identical to $x_1$ generated by the CG method, and $\hat{p}_0$ identical to $p_0$. In passing, $r_1$ is the same, with orthogonal condition $r_1^Tr_0=0$. Note that $s_0=x_1-x_0=\alpha_0 \hat{p}_0$, $y_0=g_{0}-g_{1}=r_1-r_0=\alpha_0 A\hat{p}_0$ and $s_0^Tr_1=0$. The next marching direction $\hat{p}_1$ is computed as
\begin{align}
\label{eqn:bfgs-cd-1}
\hat{p}_1 & = -H_1 g_1 = H_1 r_1 = r_1 - \frac{y_0^TH_0r_1}{s_0^Ty_0}s_0 = r_1 - \frac{y_0^Tr_1}{\hat{p}_0^Ty_0}\hat{p}_0 = r_1 - \frac{p_0^TAr_1}{p_0^TAp_0}p_0 = p_1.
\end{align}
The above formula shows that $\hat{p}_1$ is identical to $p_1$ of the CG method. Therefore all the orthogonal conditions and conjugate conditions pass up to $r_2$ and $\hat{p}_1$. Furthermore,
\begin{align}
\label{eqn:bfgs-cd-2}
H_1r_2 = H_0r_2 - \frac{1}{s_{0}^Ty_{0}}s_{0}y_{0}^TH_{0}r_2 = H_0r_2 - \frac{1}{s_{0}^Ty_{0}}s_{0}(r_1-r_0)^Tr_2 = H_0r_2 = r_2.
\end{align}

Suppose that up to $x_{k}$, the iterates and directions of BFGS and the CG method are the same, with $s_{k-1}^Tr_k=0$ and $H_{k-1}r_{k}=H_{k-2}r_{k}=\cdots=H_{0}r_{k}=r_{k}$. The next BFGS direction $\hat{p}_k$ is computed as
\begin{align}
\label{eqn:bfgs-cd-3}
\nonumber
\hat{p}_k & = -H_k g_k = H_k r_k = H_{k-1}r_k - \frac{1}{s_{k-1}^Ty_{k-1}}s_{k-1}y_{k-1}^TH_{k-1}r_{k} = r_k - \frac{1}{s_{k-1}^Ty_{k-1}}s_{k-1}y_{k-1}^Tr_{k}\\
& = r_k - \frac{y_{k-1}^Tr_{k}}{s_{k-1}^Ty_{k-1}}s_{k-1} = r_k - \frac{\hat{p}_{k-1}^TAr_{k}}{\hat{p}_{k-1}^TA\hat{p}_{k-1}}\hat{p}_{k-1} = r_k - \frac{p_{k-1}^TAr_{k}}{p_{k-1}^TAp_{k-1}}p_{k-1} = p_k,
\end{align}
where $p_k$ is the conjugate vector in the CG method. Therefore $\hat{x}_{k+1}=x_{k+1}$ will hold. Furthermore, similar to (\ref{eqn:bfgs-cd-2}) it can be derived that $H_{k}r_{k+1}=H_{k-1}r_{k+1}=\cdots=H_{0}r_{k+1}=r_{k+1}$. Hence the induction method applies. For more information on the connection of CG and quasi-Newton methods, see \cite{BFGS-Forsgren-Odland}.

\subsection{From the viewpoint of linear algebra}
Recall the Lanczos process:
given $x_0$, compute $r_0=b-Ax_0$, $v_0=\frac{r_0}{\|r_0\|}$, $\tau_0=\|r_0\|$.
\begin{align}
\label{eqn:lanczos-1}
\tau_1 v_1 & = Av_0 - \sigma_0 v_0,\quad \sigma_0=v_0^TAv_0,\quad \tau_1=\|Av_0 - \sigma_0 v_0\|,\\
\label{eqn:lanczos-2}
\tau_2 v_2 & = Av_1 - \sigma_1 v_1 - \tau_1 v_0,\quad \sigma_1=v_1^TAv_1,\quad \tau_2=\|Av_1 - \sigma_1 v_1\|,\\
\nonumber
\vdots\\
\label{eqn:lanczos-vk}
\tau_k v_k & = Av_{k-1} - \sigma_{k-1} v_{k-1} - \tau_{k-1} v_{k-2},\quad \sigma_{k-1}=v_{k-1}^TAv_{k-1},\\
\nonumber
& \quad \tau_k=\|Av_{k-1} - \sigma_{k-1} v_{k-1} - \tau_{k-1} v_{k-2}\|.
\end{align}
It is easy to see that $v_i$ and $v_j$ are orthogonal, if $i\neq j$.
Denote $V_k=[v_0,v_1,\cdots,v_{k-1}]$, $\mathcal{V}_k=\mbox{span}\{v_0,v_1,\cdots,v_{k-1}\}$, $e_k\in \mathbb{R}^k$ with the $k$-th component being 1, and
\begin{equation}
T_k\triangleq \left(
  \begin{array}{ccccc}
    \sigma_0 & \tau_1 &  & & \\
    \tau_1 & \sigma_1 & \tau_2 & & \\
     & \ddots & \ddots & \ddots &\\
     & & \ddots & \ddots & \tau_{k-1}\\
     & & & \tau_{k-1} & \sigma_{k-1}\\
  \end{array}
\right). \nonumber
\end{equation}
Then
\begin{align}
\label{eqn:lanczos-3}
AV_k & = V_k T_k + \tau_k v_k e_k^T,\\
V_k^TAV_k & = T_k.
\end{align}
$T_k$ is the projection to $\mathcal{V}_k$ of the restriction $A|_{\mathcal{V}_k}$.

Consider the projected equations
\begin{align}
\label{eqn:lanczos-4}
V_k^TA(x_0+V_k z_k) = V_k^T b,
\end{align}
i.e.,
\begin{align}
\label{eqn:lanczos-5}
V_k^TAV_k z_k = V_k^T r_0 = \|r_0\|e_1,\mbox{ or }T_k z_k = \tau_0 e_1.
\end{align}

Solve $T_k z_k = \|r_0\|e_1$ by the Cholesky method. Suppose the Cholesky decomposition of $T_k$ is
\begin{align}
\label{eqn:lanczos-6}
T_k = L_kD_kL_k^T.
\end{align}
Then
\begin{align}
\nonumber
z_k = L_k^{-T}D_k^{-1}L_k^{-1}\tau_0 e_1,
\end{align}
and
\begin{align}
\nonumber
\bar{x}_k = x_0+V_k z_k.
\end{align}
Note that although the columns of $V_k$ is accumulated with each increase in $k$, i.e., $V_{k}=[V_{k-1}, v_k]$, the components of such $z_k$ changes fully with each increase in $k$. The reason is that $T_k$ is not a lower triangular matrix.
To overcome this difficulty, rearrange the factors in the expression of $\bar{x}_k$,
\begin{align}
\nonumber
\bar{x}_k = x_0+V_k z_k = x_0+V_k L_k^{-T}D_k^{-1}L_k^{-1}\tau_0 e_1 = x_0+\bar{W}_k w_k,
\end{align}
where $\bar{W}_k=V_k L_k^{-T}$ and $w_k=D_k^{-1}L_k^{-1}\tau_0 e_1$. Let $\bar{W}_k=[\bar{p_0},\ldots,\bar{p}_{k-1}]$.

\begin{proposition}
\label{proposi:accumulativeness}
$L_k, D_k, \bar{W}_k, w_k$ are all accumulated with each increase in $k$, i.e.,
\begin{align}
\nonumber
& L_{k}(1:k-1,1:k-1) =L_{k-1},& \quad D_{k}(1:k-1,1:k-1) = D_{k-1},\\
\nonumber
& \bar{W}_{k} =[\bar{W}_{k-1}, \bar{p}_{k}],& \quad w_k = \left(w_{k-1},\bar{\alpha}_k\right)^T,
\end{align}
where $M(1:k-1,1:k-1)$ means the first $k-1$ rows and first $k-1$ columns of a matrix $M$. The columns of $\bar{W}_k$ are conjugate, i.e.,
\begin{align}
\label{eqn:lanczos-7}
\bar{p}_i^TA\bar{p}_j=0,\quad i\neq j
\end{align}
and
\begin{align}
\label{eqn:lanczos-8}
\bar{p}_i^TA\bar{p}_i=\delta_i,
\end{align}
where $\delta_i$ is the $i$-th element of the diagonal of $D_k$.
\end{proposition}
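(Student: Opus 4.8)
The plan is to reduce everything to two structural facts: that $T_{k-1}$ is the leading principal submatrix of $T_k$, and the uniqueness of the $LDL^T$ factorization of a symmetric positive definite matrix. Since $A$ is symmetric positive definite and the columns of $V_k$ are orthonormal (hence independent), $T_k=V_k^TAV_k$ is symmetric positive definite, so the factorization (\ref{eqn:lanczos-6}) with unit lower triangular $L_k$ and positive diagonal $D_k$ exists and is unique. Because $T_k$ is tridiagonal, $T_{k-1}$ is exactly its leading $(k-1)\times(k-1)$ block, obtained by deleting the last row and column.

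First I would establish the nesting of $L_k$ and $D_k$. Write the factors in block form $L_k=\left(\begin{smallmatrix} L_{k-1}' & 0 \\ \ell^T & 1\end{smallmatrix}\right)$ and $D_k=\mathrm{diag}(D_{k-1}',\delta_{k-1})$, and multiply out $L_kD_kL_k^T$. Matching the leading $(k-1)\times(k-1)$ block against $T_{k-1}$ gives $L_{k-1}'D_{k-1}'(L_{k-1}')^T=T_{k-1}$; by uniqueness of the factorization of $T_{k-1}$ this forces $L_{k-1}'=L_{k-1}$ and $D_{k-1}'=D_{k-1}$, which is precisely the claimed nesting. (Tridiagonality of $T_k$ further makes $L_k$ lower bidiagonal, so $\ell$ has a single nonzero entry and the update is local, though this is not needed for the nesting itself.)

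Next, the nesting of $\bar{W}_k$ and $w_k$ follows by propagating the block-triangular structure through the inverses. From $L_k^{-1}=\left(\begin{smallmatrix} L_{k-1}^{-1} & 0 \\ * & 1\end{smallmatrix}\right)$ one gets $L_k^{-T}=\left(\begin{smallmatrix} L_{k-1}^{-T} & * \\ 0 & 1\end{smallmatrix}\right)$, whose leading block is $L_{k-1}^{-T}$ and whose bottom-left block vanishes. Writing $V_k=[V_{k-1},v_{k-1}]$ and block-multiplying $\bar{W}_k=V_kL_k^{-T}$, the first $k-1$ columns involve only $V_{k-1}L_{k-1}^{-T}=\bar{W}_{k-1}$, so $\bar{W}_k=[\bar{W}_{k-1},\bar{p}_k]$, the last column supplying the new $\bar{p}_k$. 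Likewise, since $\tau_0e_1\in\mathbb{R}^k$ has its support in the first coordinate, $L_k^{-1}\tau_0e_1$ has $L_{k-1}^{-1}\tau_0e_1$ as its top block, and after multiplying by the block-diagonal $D_k^{-1}$ the top block of $w_k$ equals $D_{k-1}^{-1}L_{k-1}^{-1}\tau_0e_1=w_{k-1}$, giving $w_k=(w_{k-1},\bar{\alpha}_k)^T$.

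Finally, the conjugacy relations (\ref{eqn:lanczos-7})--(\ref{eqn:lanczos-8}) are a one-line consequence of the factorization. Using $\bar{W}_k=V_kL_k^{-T}$, $V_k^TAV_k=T_k$, and (\ref{eqn:lanczos-6}),
\begin{align}
\nonumber
\bar{W}_k^TA\bar{W}_k = L_k^{-1}V_k^TAV_kL_k^{-T} = L_k^{-1}T_kL_k^{-T} = L_k^{-1}(L_kD_kL_k^T)L_k^{-T} = D_k.
\end{align}
Reading off the $(i,j)$ entry gives $\bar{p}_i^TA\bar{p}_j=0$ for $i\neq j$ and $\bar{p}_i^TA\bar{p}_i=\delta_i$, which are exactly (\ref{eqn:lanczos-7}) and (\ref{eqn:lanczos-8}). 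I expect the main obstacle to be the nesting step rather than the conjugacy: one must argue carefully that the $LDL^T$ factors of the leading submatrix coincide with the leading blocks of the full factors, which is where uniqueness of the factorization and positive definiteness of $T_k$ are genuinely used. Once the nesting is in place, the remaining claims reduce to block-matrix bookkeeping together with the single identity above.
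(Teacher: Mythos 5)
Your proof is correct, and its overall strategy --- exploit the nestedness of $T_k$ plus triangular block bookkeeping --- is the same as the paper's, but the two differ in the mechanics and in completeness. For the nesting of $L_k$ and $D_k$, the paper simply declares the claim ``obvious since $T_k$ is accumulated''; you instead pin it down via existence and uniqueness of the $LDL^T$ factorization of the symmetric positive definite $T_k$ (correctly noting that positive definiteness follows from $A$ SPD and the orthonormality, hence injectivity, of $V_k$), which is the rigorous version of what the paper hand-waves. For $\bar{W}_k$, the paper avoids inverses altogether by rewriting $\bar{W}_k = V_kL_k^{-T}$ as $\bar{W}_kL_k^T = V_k$ and reading the accumulation off the unit upper bidiagonal structure of $L_k^T$ (column $j$ of $V_k$ involves only $\bar{p}_{j-1}$ and $\bar{p}_j$, so earlier columns never change); you propagate block-triangularity through $L_k^{-1}$ and $L_k^{-T}$ instead --- the same fact in a different guise, with your version not needing the bidiagonal structure and hence working for any nested family of SPD matrices, while the paper's version makes the local two-term update of $\bar{p}_j$ visible. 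Finally, your one-line identity $\bar{W}_k^TA\bar{W}_k = L_k^{-1}T_kL_k^{-T} = D_k$ supplies the conjugacy claims \eqref{eqn:lanczos-7}--\eqref{eqn:lanczos-8}, which the paper's proof does not address at all even though they are part of the proposition; this is a genuine gap in the paper that your proposal fills.
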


\begin{proof}
The claim for $L_k$ and $D_k$ is obvious since $T_k$ is accumulated with each increase in $k$. Now consider the claim for $\bar{W}_{k}$. Rewrite $\bar{W}_k=V_k L_k^{-T}$ as $\bar{W}_k L_k^{T}= V_k$, or
\begin{equation}
\left[ \bar{p}_0,\ldots,\bar{p}_{k-1} \right]
\left(
  \begin{array}{ccccc}
    1 & l_{21} &  & & \\
     & 1 & l_{32} & & \\
     &  & \ddots & \ddots &\\
     & &  & \ddots & l_{k-1,k}\\
     & & &  & 1\\
  \end{array}
\right)
=
\left[ v_0,\ldots,v_{k-1} \right] . \nonumber
\end{equation}
By the above relation, $\bar{W}_{k}$ is accumulated with each increase in $k$. The argument for $w_k$ is similar.
\end{proof}

By Proposition \ref{proposi:accumulativeness},
\begin{align}
\label{eqn:lanczos-iterate}
\bar{x}_k = x_0+\bar{W}_k w_k = x_0+\bar{W}_{k-1} w_{k-1}+\bar{\alpha}_k \bar{p}_k=\bar{x}_{k-1}+\bar{\alpha}_k \bar{p}_k.
\end{align}

Up to now, with the orthogonal conditions of $v_i$, the conjugate conditions of $p_i$ and the iterates (\ref{eqn:lanczos-iterate}), it can be seen that the Lanczos + Cholesky method is quite similar to the CG method. The connection can be found in \cite{Lanczos-Householder,Lanczos-Meurant}, and will be explored in detail in the following.

Suppose that the two methods start with the same initial guess $x_0$. At $k=0$, $T_0=\sigma_0$, therefore $L_0=1$, $D_0=\sigma_0$, and $\bar{p}_0=v_0$, $\bar{\alpha}_0=D_0^{-1}L_0^{-1}\tau_0=\frac{\tau_0}{\sigma_0}$. Note that $\sigma_0=v_0^TAv_0=\frac{r_0^TAr_0}{\|r_0\|^2}=\frac{1}{\alpha_0}$, where $\alpha_0$ is the first step size in the CG method. There holds
\begin{align}
\label{eqn:lanczos-connection-p0}
\bar{p}_0=v_0=\frac{r_0}{\|r_0\|}=\frac{p_0}{\|r_0\|},\\
\label{eqn:lanczos-connection-alpha0}
\bar{\alpha}_0=\frac{\tau_0}{\sigma_0}=\|r_0\|\alpha_0.
\end{align}
Thus
\begin{align}
\label{eqn:lanczos-connection-alpha0-p0}
\bar{\alpha}_0\bar{p}_0=\alpha_0 p_0,\\
\label{eqn:lanczos-connection-x1}
\bar{x}_1=x_0+\bar{\alpha}_0\bar{p}_0=x_1,
\end{align}
where $p_0$ and $x_1$ are the iterates in the CG method. In passing, $\bar{r}_1=r_1$. Rewrite $r_1$ in terms of $v_0$,
\begin{align}
\label{eqn:lanczos-connection-r1}
r_1 = r_0 - \bar{\alpha}_0A\bar{p}_0=\tau_0 v_0 - \bar{\alpha}_0Av_0=-\bar{\alpha}_0\left(Av_0-\frac{\tau_0}{\bar{\alpha}_0}v_0\right)=-\bar{\alpha}_0\left(Av_0-\sigma_0v_0\right).
\end{align}
It is clear now that
\begin{align}
\nonumber
-\frac{\|r_1\|}{\bar{\alpha}_0} \frac{r_1}{\|r_1\|} = Av_0-\sigma_0v_0=\tau_1 v_1.
\end{align}
Since $\tau_1>0$ and $v_1$ is a unit vector, it must be that
\begin{align}
\label{eqn:lanczos-connection-r1v1}
v_1=-\frac{r_1}{\|r_1\|}.
\end{align}
Therefore
\begin{align}
\nonumber
\sigma_1 & =v_1^TAv_1=\frac{r_1^TAr_1}{\|r_1\|^2}=\frac{p_1^TAp_1+\beta_0^2p_0^TAp_0}{\|r_1\|^2}\\
\label{eqn:lanczos-connection-sigma-1}
& = \frac{p_1^TAp_1}{\|r_1\|^2}+\beta_0\frac{\|r_1\|^2}{\|r_0\|^2}\frac{p_0^TAp_0}{\|r_1\|^2}=\frac{1}{\alpha_1}+\frac{\beta_0}{\alpha_0},\\
\label{eqn:lanczos-connection-tau-1}
\tau_1 & =\frac{\|r_1\|}{\bar{\alpha}_0}=\frac{\|r_1\|}{\|r_0\|\alpha_0}=\frac{\sqrt{\beta_0}}{\alpha_0},
\end{align}
where $\beta_0$ is the one in the CG method.

Let the indices of the matrices $L_k$ and $D_k$ be labeled starting from 0. By the relation $\bar{W}_2=V_2L_2^{-T}$, there holds
\begin{align}
\label{eqn:lanczos-connection-pv1}
[\bar{p}_0,\bar{p}_1]
\left[
  \begin{array}{cc}
    1 & l_{10}\\
     & 1
  \end{array}
\right]=[v_0,v_1]
\end{align}
and thus
\begin{align}
\nonumber
l_{10}\bar{p}_0 + \bar{p}_1 = v_1.
\end{align}
Since $\bar{p}_0$ and $\bar{p}_1$ are conjugate, by (\ref{eqn:lanczos-connection-p0}) and (\ref{eqn:lanczos-connection-r1v1})
\begin{align}
\nonumber
l_{10} & =\frac{v_1^TA\bar{p}_0}{\bar{p}_0^TA\bar{p}_0}=\frac{\|r_0\|^2}{p_0^TAp_0}\left(-\frac{r_1}{\|r_1\|}\right)^TA\left(\frac{p_0}{\|r_0\|}\right)\\
\label{eqn:lanczos-connection-l10}
& =-\frac{r_1^T(\alpha_0Ap_0)}{\|r_1\|\|r_0\|}
=\frac{r_1^T(r_1-r_0)}{\|r_1\|\|r_0\|}
=\frac{\|r_1\|}{\|r_0\|}.
\end{align}
Therefore
\begin{align}
\nonumber
\bar{p}_1 & = v_1 - l_{10}\bar{p}_0=-\frac{r_1}{\|r_1\|}-\frac{\|r_1\|}{\|r_0\|}\frac{p_0}{\|r_0\|} = - \frac{1}{\|r_1\|}\left(r_1-\frac{\|r_1\|^2}{\|r_0\|^2}p_0\right)\\
\label{eqn:lanczos-connection-p1}
& = - \frac{1}{\|r_1\|}\left(r_1-\beta_0 p_0\right)=- \frac{1}{\|r_1\|}p_1,
\end{align}
where $p_1$ and $x_2$ are the iterates in the CG method. Hence by the above equality,
\begin{align}
\label{eqn:lanczos-connection-d1}
\delta_1 = \bar{p}_1^TA\bar{p}_1 = \frac{p_1^TAp_1^T}{r_1^Tr_1}=\frac{1}{\alpha_1},
\end{align}
where $\alpha_1$ is the second step size in the CG method.

By the relation $w_2=D_2^{-1}L_2^{-1}\tau_0 e_1$, there holds
\begin{align}
\label{eqn:lanczos-connection-alphas}
\left[
  \begin{array}{cc}
    1 & \\
    l_{10} & 1
  \end{array}
\right]
\left[
  \begin{array}{c}
    \delta_0 \bar{\alpha}_0\\
    \delta_1 \bar{\alpha}_1
  \end{array}
\right]=
\tau_0\left[
  \begin{array}{c}
    1\\
    0
  \end{array}
\right],
\end{align}
and thus by (\ref{eqn:lanczos-connection-d1})
\begin{align}
\label{eqn:lanczos-connection-alpha1}
\bar{\alpha}_1=-\frac{l_{10}\delta_0\bar{\alpha}_0}{\delta_1}=-\frac{\|r_1\|}{\|r_0\|}\frac{\|r_0\|}{\delta_1}=-\|r_1\|\alpha_1.
\end{align}
From (\ref{eqn:lanczos-connection-p1}) and (\ref{eqn:lanczos-connection-alpha1}), there holds
\begin{align}
\label{eqn:lanczos-connection-alpha1-p1}
\bar{\alpha}_1\bar{p}_1=\alpha_1 p_1,\\
\label{eqn:lanczos-connection-x2}
\bar{x}_2=x_1+\bar{\alpha}_1\bar{p}_1=x_2,
\end{align}
where $x_2$ are the iterates in the CG method. By induction, the connection can be established,
\begin{align}
\label{eqn:lanczos-connection-rvk}
v_k & = (-1)^k\frac{r_k}{\|r_k\|},\\
\label{eqn:lanczos-connection-pk}
\bar{p}_k & = (-1)^k\frac{1}{\|r_k\|}p_k,\\
\label{eqn:lanczos-connection-alphak}
\bar{\alpha}_k & =(-1)^k\|r_k\|\alpha_k,\\
\label{eqn:lanczos-connection-sigma-k}
\sigma_k & = \frac{1}{\alpha_k}+\frac{\beta_{k-1}}{\alpha_{k-1}},\\
\label{eqn:lanczos-connection-tau-k}
\tau_k & = \frac{\sqrt{\beta_{k-1}}}{\alpha_{k-1}},\\
\label{eqn:lanczos-connection-lkkm1}
l_{k,k-1} & = \frac{\|r_k\|}{\|r_{k-1}\|},\\
\label{eqn:lanczos-connection-dk}
\delta_k & = \frac{1}{\alpha_k}.
\end{align}

If the process terminates at $k=n$, then $V_{n}$ is an orthogonal matrix and therefore
\begin{align}
\label{eqn:lanczos-det}
\det(A) = \det(T_n) = \det(D_{n}) = \prod_{k=0}^{n-1}\frac{1}{\alpha_k}, \mbox{ or }, \det(A^{-1}) = \prod_{k=0}^{n-1}\alpha_k.
\end{align}
On the other hand, since $\beta_{i}=\frac{\|r_{i+1}\|^2}{\|r_{i}\|^2}$, the product of $\beta_{i}$,
\begin{align}
\label{eqn:lanczos-product-beta}
\prod_{i=0}^{k-1}\beta_i=\frac{\|r_k\|^2}{\|r_{0}\|^2}
\end{align}
is nothing but the square of the 2-norm of the relative residual.

\section{Residual polynomials and conjugate polynomials}
In the CG iteration process, there are two sequences of vectors $\{r_k\}$ and $\{p_k\}$, i.e.,
those of residual vectors and conjugate vectors, with both $r_k$ and $p_k$ lying in the Krylov subspace
$\mathcal{K}=\mbox{span}\{r_0,Ar_0,\ldots,A^{k-1}r_0\}$. As is noted in the original paper \cite{Hestenes-Stiefel}, checking the CG process, $\{r_k\}$ and $\{p_k\}$ are related to two sequences of polynomials
$\{R_k(\lambda)\}$ and $\{P_k(\lambda)\}$ such that
\begin{align}
r_k = R_k(A)r_0,\\
p_k = P_k(A)r_0.
\end{align}
In this sense, $\{R_k(\lambda)\}$ and $\{P_k(\lambda)\}$ will be called residual polynomials and conjugate polynomials respectively in this paper.
Note that the degrees of $R_k(\lambda)$ and $P_k(\lambda)$ are both $k$. Although $\{r_k\}$ and $\{p_k\}$ are intertwined as
\begin{align}
r_{k+1} = r_{k} - \alpha_k A p_{k},\\
p_{k+1} = r_{k+1} + \beta_k p_{k},
\end{align}
They can be decoupled to obtain their own three-term reccurences as,
\begin{align}
\nonumber
r_{k+1} & = r_{k} - \alpha_k A p_{k}\\
\nonumber
& = r_{k} - \alpha_k A \left( r_k+\beta_{k-1}p_{k-1} \right)\\
\nonumber
& = (1-\alpha_k A)r_{k} - \alpha_k \beta_{k-1} A p_{k-1}\\
\nonumber
& = (1-\alpha_k \lambda)r_{k} - \alpha_k \beta_{k-1} \frac{1}{\alpha_{k-1}}
\left( r_{k-1}-r_{k} \right)\\
\label{eqn:three-term-reccurence-rk}
& = \left( 1+\frac{\alpha_k}{\alpha_{k-1}}\beta_{k-1}-\alpha_k A \right) r_{k} - \frac{\alpha_k}{\alpha_{k-1}}\beta_{k-1} r_{k-1},\\
\nonumber
p_{k+1} & = r_{k+1} + \beta_k p_{k}\\
\nonumber
& = r_{k} - \alpha_k A p_k + \beta_k p_{k}\\
\nonumber
& = p_{k} -\beta_{k-1}p_{k-1} - \alpha_k A p_k + \beta_k p_{k}\\
\label{eqn:three-term-reccurence-pk}
& = \left( 1+\beta_k- \alpha_k A \right) p_{k} -\beta_{k-1}p_{k-1}.
\end{align}
Correspondingly, $\{R_k(\lambda)\}$ and $\{P_k(\lambda)\}$ are intertwined as
\begin{align}
R_{k+1}(\lambda) = R_{k}(\lambda) - \alpha_k \lambda P_{k}(\lambda),\\
P_{k+1}(\lambda) = R_{k+1}(\lambda) + \beta_k P_{k}(\lambda).
\end{align}
and have their own three-term reccurences as,
\begin{align}
\nonumber
R_{k+1}(\lambda) & = \left( 1+\frac{\alpha_k}{\alpha_{k-1}}\beta_{k-1}-\alpha_k \lambda \right) R_{k}(\lambda) - \frac{\alpha_k}{\alpha_{k-1}}\beta_{k-1} R_{k-1}(\lambda)\\
P_{k+1}(\lambda) & = \left( 1+\beta_k- \alpha_k \lambda \right) P_{k}(\lambda) -\beta_{k-1}P_{k-1}(\lambda).
\end{align}
The properties of the polynomials $\{R_k(\lambda)\}$ and $\{P_k(\lambda)\}$ reveals certain information of the matrix $A$.

\subsection{The roots of residual polynomials and conjugate polynomials}
Recall the Lanczos process. Two sequences of polynomials $\{\bar{R}_k(\lambda)\}$ and $\{\bar{P}_k(\lambda)\}$ can also be defined such that
\begin{align}
v_k = \bar{R}_k(A)v_0,\\
\bar{p}_k = \bar{P}_k(A)v_0.
\end{align}
Due to the correspondences of $v_k,r_k$ and $\bar{p}_k,p_k$ in (\ref{eqn:lanczos-connection-rvk}) and (\ref{eqn:lanczos-connection-pk}), $\bar{R}_k(\lambda)$ is
a multiple of $R_k(\lambda)$, and $\bar{P}_k(\lambda)$ a multiple of $P_k(\lambda)$. Therefore the roots of $R_k(\lambda)$ are the same as those of $\bar{R}_k(\lambda)$, and the roots of $P_k(\lambda)$ the same as those of $\bar{P}_k(\lambda)$. The roots of $\bar{R}_k(\lambda)$ and those of $\bar{P}_k(\lambda)$ are closely related to the the relation $AV_k = V_kT_k + \tau_k v_k e_k^T$ in Lanczos process. Rewrite this relation as $V_k^TA = T_kV_k^T + \tau_k e_k v_k^T$, that is
\begin{align}
\nonumber
\left(
  \begin{array}{c}
    v_0^T \\
    v_1^T \\
    \vdots \\
    v_{k-1}^T
  \end{array}
\right)A
= T_k
\left(
  \begin{array}{c}
    v_0^T \\
    v_1^T \\
    \vdots \\
    v_{k-1}^T
  \end{array}
\right)
+ \tau_k
\left(
  \begin{array}{c}
    0 \\
    0 \\
    \vdots \\
    v_{k}^T
  \end{array}
\right).
\end{align}
Note that $v_k=\bar{R}_k(A)v_0$, the above formula is rewritten further as
\begin{align}
\label{eqn:reccurence-Tk-Rk}
\left(
  \begin{array}{c}
    \left( \bar{R}_0(A)v_0 \right)^T \\
    \left( \bar{R}_1(A)v_0 \right)^T \\
    \vdots \\
    \left( \bar{R}_{k-1}(A)v_0 \right)^T \\
  \end{array}
\right)A
= T_k
\left(
  \begin{array}{c}
    \left( \bar{R}_0(A)v_0 \right)^T \\
    \left( \bar{R}_1(A)v_0 \right)^T \\
    \vdots \\
    \left( \bar{R}_{k-1}(A)v_0 \right)^T \\
  \end{array}
\right)
+ \tau_k
\left(
  \begin{array}{c}
    0 \\
    0 \\
    \vdots \\
    \left( \bar{R}_{k}(A)v_0 \right)^T
  \end{array}
\right).
\end{align}
It turns out that the sequence of polynomials $\bar{R}_k(\lambda)$ satisfies the same reccurence relation,
\begin{align}
\label{eqn:eigenvalue-Tk}
\left(
  \begin{array}{c}
    \bar{R}_0(\lambda) \\
    \bar{R}_1(\lambda) \\
    \vdots \\
    \bar{R}_{k-1}(\lambda) \\
  \end{array}
\right)\lambda
= T_k
\left(
  \begin{array}{c}
    \bar{R}_0(\lambda) \\
    \bar{R}_1(\lambda) \\
    \vdots \\
    \bar{R}_{k-1}(\lambda) \\
  \end{array}
\right)
+ \tau_k
\left(
  \begin{array}{c}
    0 \\
    0 \\
    \vdots \\
    \bar{R}_{k}(\lambda)
  \end{array}
\right).
\end{align}
which can be considered as derived from (\ref{eqn:reccurence-Tk-Rk}) by replacing $A$ by $\lambda$ and ignoring $v_0$.
In fact, by (\ref{eqn:lanczos-vk}), there holds
\begin{align}
Av_{k-1} = \sigma_{k-1} v_{k-1} + \tau_{k-1} v_{k-2} + \tau_k v_k.
\end{align}
Since $v_k = \bar{R}_k(A)v_0$, the above formula is converted to
\begin{align}
A\bar{R}_{k-1}(A)v_0 = \sigma_{k-1} \bar{R}_{k-1}(A)v_0 + \tau_{k-1} \bar{R}_{k-2}(A)v_0 + \tau_k \bar{R}_k(A) v_0.
\end{align}
Thus the polynomials $\bar{R}_k(\lambda)$ satisfy the reccurence
\begin{align}
\lambda\bar{R}_{k-1}(\lambda) = \sigma_{k-1} \bar{R}_{k-1}(\lambda) + \tau_{k-1} \bar{R}_{k-2}(\lambda) + \tau_k \bar{R}_k(\lambda).
\end{align}
Putting them in matrix-vector format, (\ref{eqn:eigenvalue-Tk}) is resulted.

From (\ref{eqn:eigenvalue-Tk}), it is easily seen that the roots $\bar{\lambda}_i$ of $\bar{R}_k(\lambda)$ is nothing but the eigenvalues of $T_k$, that is,
\begin{align}
\nonumber
T_k
\left(
  \begin{array}{c}
    \bar{R}_0(\bar{\lambda}_i) \\
    \bar{R}_1(\bar{\lambda}_i) \\
    \vdots \\
    \bar{R}_{k-1}(\bar{\lambda}_i) \\
  \end{array}
\right)
=
\bar{\lambda}_i
\left(
  \begin{array}{c}
    \bar{R}_0(\bar{\lambda}_i) \\
    \bar{R}_1(\bar{\lambda}_i) \\
    \vdots \\
    \bar{R}_{k-1}(\bar{\lambda}_i) \\
  \end{array}
\right).
\end{align}
The corresponding eigenvectors are formed by the function values of $\bar{R}_j$ at $\bar{\lambda}_i$, $j=0,\ldots,k-1$. Therefore, $\bar{R}_k(\lambda)$ or $R_k(\lambda)$ is the characteristic polynomial of $T_k$. If there is no roundoff error and the CG iteration process is  terminated at step $n$, $T_n$ is an $n\times n$ matrix similar to $A$ and $R_n(\lambda)$ is the characteristic polynomial of $A$. In this case, $\{\bar{R}_k(\lambda)\}$ or $\{R_k(\lambda)\}$ are the Sturm sequence of $T_n$.

Furthermore, if $v_m$ or $r_m$ vanishes for some $m<n$, $\bar{R}_m(\lambda)$ or $R_m(\lambda)$ has common factor with the characteristic polynomial of $A$. To see this, let $\varphi_i, i=1,\ldots,n$, be the normalized eigenvectors of the symmetric positive definite matrix $A$ and $\lambda_i$ the corresponding eigenvalues. Suppose that $r_0=\xi_{i_1} \varphi_{i_1}+\cdots+\xi_{i_l} \varphi_{i_l} \neq 0$, with $\xi_{i_1} \neq 0, \ldots, \xi_{i_l} \neq 0$. Then
\begin{align}
r_m = R_m(A)r_0 = R_m(\lambda_{i_1})\xi_{i_1} \varphi_{i_1} + \cdots + R_m(\lambda_{i_l})\xi_{i_l} \varphi_{i_l}.
\end{align}
Therefore $R_m(\lambda_{i_1})=0, \ldots, R_m(\lambda_{i_l})=0$. If the corresponding eigenvalues $\lambda_{i_1},\ldots,
\lambda_{i_l}$ are distinct, then $m=l$. Because if $m<l$, then the polynomial $R_m(\lambda)$ of degree $m<l$ will have more than $m$ different roots, a contradiction. If $m>l$, then the dimension of $\mbox{span}\{r_0,\ldots,r_{m-1}\}$ will not equal to the dimension of the Krylov subspace $\mbox{span}\{r_0,\ldots,A^{m-1}r_0\}$, again a contradiction. In this case of $m=l$, $R_m(\lambda)$ is a factor of the characteristic polynomial of $A$.

As for the roots of the conjugate polynomials $\bar{P}_k(\lambda)$, consider again the reccurence relation $AV_k = V_kT_k + \tau_k v_k e_k^T$ in Lanczos process. Due to the relation $\bar{W}_k=V_k L_k^{-T}$ between $\bar{p}_i$ and $v_i$ and the fact that $e_k^T L_k^{-T} = e_k^T$, there holds $A\bar{W}_k = \bar{W}_kL_k^TL_kD_k + \tau_k v_k e_k^T$. Note that $L_k^TL_kD_k$ is not symmetric. Scaling trick is applied,
\begin{align}
\label{eqn:recurrence-pk-bar}
A\bar{W}_kD_k^{-\frac{1}{2}} = \bar{W}_kD_k^{-\frac{1}{2}}D_k^{\frac{1}{2}}L_k^TL_kD_k^{\frac{1}{2}} + \tau_k v_k e_k^TD_k^{-\frac{1}{2}}.
\end{align}
Denote $\bar{\bar{W}}_k = \bar{W}_kD_k^{-\frac{1}{2}}$ and $\bar{T}_k = D_k^{\frac{1}{2}}L_k^TL_kD_k^{\frac{1}{2}}$. Since
$$
\bar{T}_k = D_k^{\frac{1}{2}}L_k^TL_kD_k^{\frac{1}{2}}D_k^{\frac{1}{2}}L_k^T L_k^{-T}D_k^{-\frac{1}{2}} = \left( D_k^{\frac{1}{2}}L_k^T \right) T_k \left( D_k^{\frac{1}{2}}L_k^T \right)^{-1},
$$
$\bar{T}_k$ is similar to $T_k$. By the relation $v_k = l_{k,k-1}\bar{p}_{k-1} + \bar{p}_k$ and the fact that
$\bar{p}_{k-1}/\sqrt{\delta_{k-1}}$ is the last column of $\bar{\bar{W}}_k$, (\ref{eqn:recurrence-pk-bar}) is rewritten as
\begin{align}
\nonumber
A\bar{\bar{W}}_k & = \bar{\bar{W}}_k\bar{T}_k + \frac{1}{\sqrt{\delta_{k-1}}}\tau_k l_{k,k-1} \bar{p}_{k-1} e_k^T + \frac{1}{\sqrt{\delta_{k-1}}}\tau_k \bar{p}_{k} e_k^T \\
\label{eqn:recurrence-pk-bar-bar}
& = \bar{\bar{W}}_k\bar{\bar{T}}_k + \frac{1}{\sqrt{\delta_{k-1}}}\tau_k \bar{p}_{k} e_k^T,
\end{align}
where
$$
\bar{\bar{T}}_k = \bar{T}_k +
\left(
  \begin{array}{ccc}
    & & \\
    & & \\
    & & \tau_k l_{k,k-1}
  \end{array}
\right)
=
\bar{T}_k +
\left(
  \begin{array}{ccc}
    & & \\
    & & \\
    & & \frac{\beta_{k-1}}{\alpha_{k-1}}
  \end{array}
\right),
$$
and $\alpha_k, \beta_k$ are quantities in CG iteration.
Proceed as the arguments for the roots of $\bar{R}_k(\lambda)$, it can be seen that the roots of the conjugate polynomial $P_k(\lambda)$ or $\bar{P}_k(\lambda)$, are the eigenvalues of $\bar{\bar{T}}_k$ which is a modification of $\bar{T}_k$, with $\bar{T}_k$ similar to $T_k$. For the roots of such sequences of polynomials of more general conjugate gradient method, see \cite{rootspolynomials-Manteuffel}.

\subsection{Duality between residual polynomials and $n$-dimensional geometry}
Recall that in the CG iteration process, the residual vectors are orthogonal with respect to the Euclidean inner product $(\cdot,\cdot)$, that is, $(r_i,r_j)=0, i\ne j$. Since the residual polynomial $R_i(\lambda)$ is associated with $r_i$, a natural question is that whether the polynomials $R_i(\lambda)$ are orthogonal in some sense? The answer is yes, provided in the original paper \cite{Hestenes-Stiefel}.

In order to explain the idea, as above let $\varphi_i, i=1,\ldots,n$ be the normalized eigenvectors of the symmetric positive definite matrix $A$ and $\lambda_i$ the corresponding eigenvalues. Suppose that $x_0$ is the initial iterate such that $r_0=\xi_1 \varphi_1+\ldots+\xi_n \varphi_n$ with all $\xi_i\ne 0$. In this setting, let us relate the inner product of $r_i, r_j$ with the polynomials $R_i(\lambda), R_j(\lambda)$ as follows,
\begin{align}
\nonumber
(r_i,r_j) & = \left( R_i(A)r_0, R_j(A)r_0 \right)\\
\nonumber
& = \left( R_i(\lambda_1)\xi_1\varphi_1+\ldots+R_i(\lambda_n)\xi_n\varphi_n, R_j(\lambda_1)\xi_1\varphi_1+\ldots+R_j(\lambda_n)\xi_n\varphi_n \right)\\
\label{eqn:duality-ri-Ri-1}
& = \xi_1^2 R_i(\lambda_1) R_j(\lambda_1)+\ldots+\xi_n^2 R_i(\lambda_n) R_j(\lambda_n).
\end{align}
The point is that whether (\ref{eqn:duality-ri-Ri-1}) can be viewed as an inner product of the polynomials $R_i(\lambda), R_j(\lambda)$. Define a step function $m(\lambda)$ as follows,
\begin{align}
\label{eqn:definition-mass-distribution}
m(\lambda)=
\left\{
\begin{aligned}
& 0,\quad \lambda<\lambda_1\\
& \xi_1^2, \quad \lambda_1 \le \lambda < \lambda_2\\
& \vdots\\
& \xi_1^2+\ldots+\xi_i^2,\quad \lambda_i \le \lambda < \lambda_{i+1}\\
& \xi_1^2+\ldots+\xi_n^2=1,\quad \lambda_n \le \lambda.
\end{aligned}
\right.
\end{align}
It is easily seen that $m(\lambda)$ is a nonnegative and nondecreasing function. The Riemann-Stieltjes integral exists for any continuous function $f(\lambda)$ with respect to $m(\lambda)$, and
\begin{align}
\int_0^c f(\lambda)dm(\lambda) = \xi_1^2 f(\lambda_1)+\ldots+\xi_n^2 f(\lambda_n),
\end{align}
where $c>\lambda_n$ is a constant. Under this definition,
\begin{align}
\label{eqn:duality-ri-Ri-2}
\int_0^c R_i(\lambda) R_j(\lambda)dm(\lambda) = (r_i,r_j).
\end{align}
That is the polynomials $R_i(\lambda), R_j(\lambda)$ are orthogonal with respect to this Riemann-Stieltjes integral. Note that $r_0,r_1,\ldots,r_{n-1}$ are orthogonal vectors of $\mathbb{R}^n$ and thus are a basis. Also note that $R_0(\lambda),R_1(\lambda),\ldots,R_{n-1}(\lambda)$ are a basis of the polynomial space $\mathbb{P}^{n-1}$. If the correspondence $\lambda^k \leftrightarrow A^kr_0$ is specified, then the $n$-dimensional space $\mathbb{R}^{n}$ is isomorphic to the polynomial space $\mathbb{P}^{n-1}$. In the original paper \cite{Hestenes-Stiefel}, Hestenes and Stiefel called the polynomials $R_i(\lambda)$ orthogonal polynomials based on such orthogonality.

As for the conjugate vectors and conjugate polynomials, there are similar relations,
\begin{align}
\nonumber
(Ap_i,p_j) & = \left( AP_i(A)r_0, P_j(A)r_0 \right)\\
\nonumber
& = \xi_1^2 \lambda_1 P_i(\lambda_1) P_j(\lambda_1)+\ldots+\xi_n^2 \lambda_n P_i(\lambda_n) P_j(\lambda_n)\\
\label{eqn:duality-pi-Pi}
& = \int_0^c \lambda P_i(\lambda) P_j(\lambda)dm(\lambda).
\end{align}
Since the conjugate vectors $p_i$ satisfy $(Ap_i,p_j)=0$, the corresponding conjugate polynomials $P_i(\lambda)$ are orthogonal with respect to the weight function $\lambda$.

\section{Convergence rate}
Since CG is closely related to the steepest descent method, the convergence rates of the two methods will be reviewed and compared. Note that given a vector $b$ and a symmetric positive definite matrix $A$, the iterative sequence $\{x_k\}$ of the two methods are completely dertimined by the initial guess $x_0$. As for two successive iterates, for the steepest descent, $x_{k+1}$ is determined totally by $x_k$; for CG, $x_{k+1}$ is determined by $x_k$ and the conjugate direction $p_{k}$. Usually there are two kinds of measurements for convergence rate of an iterative method. One is the ratio of every two successive error norms,
\begin{align}
\frac{\|x_{k+1}-x_*\|}{\|x_{k}-x_*\|},
\end{align}
which is usually adopted for the steepest descent, while the another is the total effect of the ratio in $k$ steps,
\begin{align}
\frac{\|x_{k}-x_*\|}{\|x_{0}-x_*\|}.
\end{align}
which is usually used for CG. Let us call the former two-term ratio and the latter $k$-term ratio.

Note that
\begin{align}
J(x)=\frac{1}{2}x^TAx-x^Tb=\frac{1}{2}x^TAx-x^TAx_*=\frac{1}{2}(x-x_*)^TA(x-x_*) - \frac{1}{2}x_*^TAx_*.
\end{align}
Thus the $A$-norm of error satisfies
\begin{align}
\label{eqn:error-A-norm-Jx}
(x-x_*)^TA(x-x_*)=2J(x)+x_*^TAx_*.
\end{align}
At $x_k$, if $ J(x)$ is marching along the direction $d_k$, by (\ref{eqn:descent-quantity}) and the fact that $\|x_{k}-x_*\|_A^2 = r_k^TA^{-1}r_k$, the $A$-norms of successive errors satisfy
\begin{align}
\label{eqn:successive-error-A-norm-SD}
\|x_{k+1}-x_*\|_A^2 = \|x_{k}-x_*\|_A^2 - \frac{\left(r_k^Td_k\right)^2}{d_k^TAd_k} = \|x_{k}-x_*\|_A^2 \left( 1 - \frac{\left(r_k^Td_k\right)^2}{d_k^TAd_k\cdot r_k^TA^{-1}r_k} \right).
\end{align}

\subsection{Convergence rate of steepest descent}
In this case, $d_k=r_k$. Define
\begin{align}
Q_{sd}(x_k)\triangleq \frac{\|x_{k+1}-x_*\|_A}{\|x_{k}-x_*\|_A},
\end{align}
A reasonable definition of the convergence factor for the steepest descent is
\begin{align}
Q_{sd}=\max_{x_k}Q_{sd}(x_k).
\end{align}
Note that by (\ref{eqn:error-A-norm-Jx}),
\begin{align}
\nonumber
\|x_{k+1}-x_*\|_A^2 & = 2J(x_{k+1})+x_*^TAx_*\\
\nonumber
& = \min_{\alpha}2J(x_k+\alpha r_k) + x_*^TAx_* \\
\nonumber
& = \min_{\alpha}(x_k+\alpha r_k-x_*)^TA(x_k+\alpha r_k-x_*).
\end{align}
Therefore, in essence, the convergence factor of the steepest descent is a max-min problem
\begin{align}
Q_{sd}^2=\max_{x_k}Q_{sd}^2(x_k) = \max_{x_k}\min_{\alpha}\frac{(x_k+\alpha r_k-x_*)^TA(x_k+\alpha r_k-x_*)}{(x_k-x_*)^TA(x_k-x_*)}.
\end{align}
As is seen in (\ref{eqn:successive-error-A-norm-SD}), the inner optimization problem min has a solution and
\begin{align}
\label{prob:convergence-factor-general}
Q_{sd}^2 = \max_{x_k}\left( 1 - \frac{\left(r_k^Td_k\right)^2}{d_k^TAd_k\cdot r_k^TA^{-1}r_k} \right).
\end{align}
With $d_k=r_k$, 
\begin{align}
\frac{\left(r_k^Td_k\right)^2}{d_k^TAd_k\cdot r_k^TA^{-1}r_k} = \frac{\left(r_k^Tr_k\right)^2}{r_k^TAr_k\cdot r_k^TA^{-1}r_k} =
\left( \frac{r_k^Tr_k}{r_k^TAr_k} \right)
\left( \frac{r_k^Tr_k}{r_k^TA^{-1}r_k} \right).
\end{align}
(\ref{prob:convergence-factor-general}) is reduced to
\begin{align}
\label{prob:convergence-factor-steepest-descent}
Q_{sd}^2 = \max_{r_k}\left( 1 - \left( \frac{r_k^Tr_k}{r_k^TAr_k} \right)
\left( \frac{r_k^Tr_k}{r_k^TA^{-1}r_k} \right) \right).
\end{align}
Setting $v = \frac{r_k}{\|r_k\|_2}$, problem (\ref{prob:convergence-factor-steepest-descent}) is related to the following constrained optimization problem,
\begin{align}
\label{prob:harmonic-optimization-1}
\left\{
\begin{aligned}
\max_{v} \left( v^TAv \right) \left( v^TA^{-1}v \right)\\
s.t.\quad \|v\|_2 = 1.
\end{aligned}
\right.
\end{align}
Since $A$ is symmetric positive definite, using the spectral information of $A$, problem (\ref{prob:harmonic-optimization-1}) is equivalent to
\begin{align}
\label{prob:harmonic-optimization-2}
\left\{
\begin{aligned}
\max_{\xi_i} \left( \sum_{i=1}\lambda_i \xi_i^2 \right)\left( \sum_{i=1}\lambda_i^{-1} \xi_i^2 \right)\\
s.t.\quad \sum_{i}\xi_i^2 = 1,
\end{aligned}
\right.
\end{align}
By setting $t_i = \xi_i^2$, it is reduced to
\begin{align}
\label{prob:harmonic-optimization-3}
\left\{
\begin{aligned}
\max_{t_i} \left( \sum_{i=1}\lambda_i t_i \right)\left( \sum_{i=1}\lambda_i^{-1} t_i \right)\\
s.t.\quad \sum_{i}t_i = 1\\
t_i \ge 0.
\end{aligned}
\right.
\end{align}
Problem (\ref{prob:harmonic-optimization-3}) has an explicit solution. Before the explicit solution is derived, a lemma is needed.

\begin{lemma}
\label{lem:harmonic-variation}
Let $0< \lambda_1 < \lambda_2 < \lambda_3$ be three positive numbers and denote $c_{ij} = \left( \sqrt{\frac{\lambda_j}{\lambda_i}} - \sqrt{\frac{\lambda_i}{\lambda_j}} \right)^2$. Then there holds
\begin{align}
\label{eqn:harmonic-variation-1}
\sqrt{c_{12}} + \sqrt{c_{23}} < \sqrt{c_{13}},
\end{align}
and therefore
\begin{align}
\label{eqn:harmonic-variation-2}
c_{12} + c_{23} < c_{13}.
\end{align}
\end{lemma}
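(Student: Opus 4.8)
The plan is to first reduce $c_{ij}$ to a form that depends only on the ratio $\lambda_j/\lambda_i$, and then recognize the claimed inequality as a superadditivity statement for a simple one-variable function. Expanding the square gives
\begin{align}
\nonumber
c_{ij} = \frac{\lambda_j}{\lambda_i} - 2 + \frac{\lambda_i}{\lambda_j} = \frac{(\lambda_j-\lambda_i)^2}{\lambda_i\lambda_j},
\end{align}
so that for $i<j$, where $\lambda_j>\lambda_i$, the square root is
\begin{align}
\nonumber
\sqrt{c_{ij}} = \sqrt{\frac{\lambda_j}{\lambda_i}} - \sqrt{\frac{\lambda_i}{\lambda_j}} = f\left(\frac{\lambda_j}{\lambda_i}\right),\quad\mbox{where}\quad f(x)=\sqrt{x}-\frac{1}{\sqrt{x}}.
\end{align}
The crucial structural observation is that the three ratios are multiplicatively linked: $\frac{\lambda_3}{\lambda_1}=\frac{\lambda_2}{\lambda_1}\cdot\frac{\lambda_3}{\lambda_2}$. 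Setting $u=\lambda_2/\lambda_1>1$ and $v=\lambda_3/\lambda_2>1$, inequality (\ref{eqn:harmonic-variation-1}) becomes precisely
\begin{align}
\nonumber
f(u)+f(v) < f(uv),\quad u,v>1.
\end{align}

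To prove this superadditivity, I would introduce the logarithmic substitution $u=e^{2s}$, $v=e^{2t}$ with $s,t>0$, under which $f(u)=e^{s}-e^{-s}=2\sinh s$, $f(v)=2\sinh t$, and $f(uv)=2\sinh(s+t)$. The claim then reduces to $\sinh s+\sinh t<\sinh(s+t)$, which follows immediately from the addition formula
\begin{align}
\nonumber
\sinh(s+t)=\sinh s\,\cosh t+\cosh s\,\sinh t > \sinh s+\sinh t,
\end{align}
since $\cosh s,\cosh t>1$ for $s,t>0$ and $\sinh s,\sinh t>0$. (Alternatively, one can fix $v$, note $g(u):=f(uv)-f(u)-f(v)$ vanishes at $u=1$ because $f(1)=0$, and check $g'(u)=\tfrac{\sqrt{v}-1}{2\sqrt{u}}\bigl(1-\tfrac{1}{u\sqrt{v}}\bigr)>0$ for $u>1$; the hyperbolic route is cleaner.) This establishes (\ref{eqn:harmonic-variation-1}).

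The passage to (\ref{eqn:harmonic-variation-2}) is then routine: since all three quantities $c_{ij}$ are positive, squaring (\ref{eqn:harmonic-variation-1}) yields
\begin{align}
\nonumber
c_{13} > \left(\sqrt{c_{12}}+\sqrt{c_{23}}\right)^2 = c_{12}+c_{23}+2\sqrt{c_{12}c_{23}} \ge c_{12}+c_{23},
\end{align}
which is the desired conclusion. I expect the only genuine obstacle to be the superadditivity step; everything else is algebraic bookkeeping. The hyperbolic substitution is what turns that step from an opaque radical inequality into the transparent and well-known fact that $\sinh$ is superadditive on the positive axis, so I would organize the write-up around reaching that reduction as quickly as possible.
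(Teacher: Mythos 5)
Your proposal is correct, and it reaches the conclusion by a genuinely different route than the paper. Both arguments pivot on the same identity $\sqrt{c_{ij}}=\sqrt{\lambda_j/\lambda_i}-\sqrt{\lambda_i/\lambda_j}=\frac{\lambda_j-\lambda_i}{\sqrt{\lambda_i\lambda_j}}$ for $i<j$, but the paper proceeds by pure algebraic rearrangement: it opens with an ``easy to verify'' cross-multiplied inequality which, after cancelling the common positive factor $(\lambda_2-\lambda_1)(\lambda_3-\lambda_2)/\sqrt{\lambda_1\lambda_2\lambda_3}$, amounts to nothing more than $\sqrt{\lambda_1}+\sqrt{\lambda_2}<\sqrt{\lambda_2}+\sqrt{\lambda_3}$, and then moves terms across to land on $\frac{\lambda_2-\lambda_1}{\sqrt{\lambda_1\lambda_2}}+\frac{\lambda_3-\lambda_2}{\sqrt{\lambda_2\lambda_3}}<\frac{\lambda_3-\lambda_1}{\sqrt{\lambda_1\lambda_3}}$, which is (\ref{eqn:harmonic-variation-1}). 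You instead exploit the multiplicative structure $\frac{\lambda_3}{\lambda_1}=\frac{\lambda_2}{\lambda_1}\cdot\frac{\lambda_3}{\lambda_2}$, reduce to $f(u)+f(v)<f(uv)$ for $f(x)=\sqrt{x}-1/\sqrt{x}$, and identify $f(e^{2s})=2\sinh s$, so the claim becomes the superadditivity $\sinh s+\sinh t<\sinh(s+t)$, immediate from the addition formula; your derivative computation $g'(u)=\frac{\sqrt{v}-1}{2\sqrt{u}}\bigl(1-\frac{1}{u\sqrt{v}}\bigr)>0$ in the alternative route also checks out, as does the squaring step to (\ref{eqn:harmonic-variation-2}), which matches the paper's implicit ``and therefore.'' What each approach buys: the paper's argument is completely elementary and self-contained, in keeping with its stated expository style, but its opening inequality appears unmotivated until one performs the cancellation; your reduction explains \emph{why} the inequality holds (the quantity $\sqrt{c_{ij}}$ depends only on the log-ratio of the eigenvalues, and $\sinh$ is superadditive on the positive axis), makes the scale-invariance explicit, and generalizes at once to chains $\sqrt{c_{12}}+\sqrt{c_{23}}+\cdots+\sqrt{c_{k-1,k}}<\sqrt{c_{1k}}$, which would also serve the subsequent Proposition on the steepest-descent convergence factor.
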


\begin{proof}
It is easy to verify that
\begin{align}
\nonumber
\frac{\lambda_2-\lambda_1}{\sqrt{\lambda_1}} \frac{\lambda_3-\lambda_2}{\sqrt{\lambda_2 \lambda_3}\left( \sqrt{\lambda_2}+\sqrt{\lambda_3} \right)}
<
\frac{\lambda_3-\lambda_2}{\sqrt{\lambda_3}} \frac{\lambda_2-\lambda_1}{\sqrt{\lambda_1 \lambda_2}\left( \sqrt{\lambda_1}+\sqrt{\lambda_2} \right)}.
\end{align}
Thus
\begin{align}
\nonumber
\frac{\lambda_2-\lambda_1}{\sqrt{\lambda_1}} \left( \frac{1}{\sqrt{\lambda_2}}-\frac{1}{\sqrt{\lambda_3}} \right)
<
\frac{\lambda_3-\lambda_2}{\sqrt{\lambda_3}} \left( \frac{1}{\sqrt{\lambda_1}}-\frac{1}{\sqrt{\lambda_2}} \right).
\end{align}
Moving terms to both sides gives
\begin{align}
\nonumber
\frac{\lambda_2-\lambda_1}{\sqrt{\lambda_1 \lambda_2}} + \frac{\lambda_3-\lambda_2}{\sqrt{\lambda_2 \lambda_3}}
<
\frac{\lambda_2-\lambda_1}{\sqrt{\lambda_1 \lambda_3}} + \frac{\lambda_3-\lambda_2}{\sqrt{\lambda_1 \lambda_3}}
=
\frac{\lambda_3-\lambda_1}{\sqrt{\lambda_1 \lambda_3}},
\end{align}
which is nothing but (\ref{eqn:harmonic-variation-1}).
\end{proof}

\begin{proposition}
\label{proposi:harmonic-optimization}
Let $0<\lambda_1 \le \lambda_2 \le \ldots \le \lambda_n$ be the eigenvalues of the  symmetric positive definite matrix $A$. Then the maximum value of problem (\ref{prob:harmonic-optimization-3}) is
\begin{align}
\frac{1}{4}\left( \lambda_1+\lambda_n \right)\left( \frac{1}{\lambda_1}+\frac{1}{\lambda_n} \right).
\end{align}
\end{proposition}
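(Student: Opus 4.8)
The plan is to recast the product in problem (\ref{prob:harmonic-optimization-3}) in terms of the quantities $c_{ij}$ of Lemma \ref{lem:harmonic-variation} and then to show that a maximizer puts all its mass on the two extreme eigenvalues. First I would use $\sum_i t_i=1$ to expand
\begin{align}
\nonumber
\Big(\sum_i \lambda_i t_i\Big)\Big(\sum_j \lambda_j^{-1} t_j\Big)
= \sum_{i,j}\frac{\lambda_i}{\lambda_j}t_i t_j
= 1 + \sum_{i<j}\Big(\frac{\lambda_i}{\lambda_j}+\frac{\lambda_j}{\lambda_i}-2\Big)t_i t_j
= 1 + \sum_{i<j}c_{ij}t_i t_j,
\end{align}
where the diagonal is absorbed through $1=(\sum_i t_i)^2=\sum_i t_i^2+\sum_{i<j}2t_it_j$. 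Hence it suffices to maximize $g(t)=\sum_{i<j}c_{ij}t_it_j$ over the simplex and to prove $\max g=\tfrac14 c_{1n}$; a direct check that $1+\tfrac14 c_{1n}=\tfrac14(\lambda_1+\lambda_n)(\lambda_1^{-1}+\lambda_n^{-1})$ then gives the stated value. Since $g$ depends only on the distinct eigenvalues (equal ones may be merged, their $c_{ij}$ being $0$), I assume $\lambda_1<\dots<\lambda_n$ so that Lemma \ref{lem:harmonic-variation} applies with strict inequalities.

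The simplex is compact and $g$ continuous, so a maximizer $t^{*}$ exists, and the crux is to show that $t^{*}$ has at most two nonzero entries. Suppose instead three indices $a<b<c$ lie in its support. Restricting $g$ to mass-preserving perturbations $w$ supported on $\{a,b,c\}$ (so $w_a+w_b+w_c=0$) and eliminating $w_b=-w_a-w_c$, the second-order form of the constant Hessian $C=(c_{ij})$ (which has zero diagonal) becomes
\begin{align}
\nonumber
w^{T} C w = 2\big(-c_{ab}w_a^2-c_{bc}w_c^2+(c_{ac}-c_{ab}-c_{bc})w_aw_c\big).
\end{align}
This binary quadratic form in $(w_a,w_c)$ has negative diagonal entries, hence attains a strictly positive value precisely when its matrix fails to be negative semidefinite, i.e. when $(c_{ac}-c_{ab}-c_{bc})^2>4c_{ab}c_{bc}$; since $c_{ac}-c_{ab}-c_{bc}>0$ by (\ref{eqn:harmonic-variation-2}), this is equivalent to $\sqrt{c_{ac}}>\sqrt{c_{ab}}+\sqrt{c_{bc}}$, which is exactly (\ref{eqn:harmonic-variation-1}). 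Thus Lemma \ref{lem:harmonic-variation} furnishes a feasible direction $w$ with $w^{T}Cw>0$.

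I would then derive a contradiction. Because $t^{*}_a,t^{*}_b,t^{*}_c>0$, both $t^{*}\pm\varepsilon w$ remain in the simplex for small $\varepsilon$, so first-order optimality forces $w^{T}\nabla g(t^{*})=w^{T}C t^{*}=0$; as $g$ is quadratic, $g(t^{*}+\varepsilon w)=g(t^{*})+\tfrac12\varepsilon^2 w^{T}Cw>g(t^{*})$, contradicting maximality. Hence every maximizer is supported on at most two indices. On a two-point support $\{i,j\}$ one has $g=c_{ij}t_it_j\le\tfrac14 c_{ij}$, and since $c_{ij}=\rho+\rho^{-1}-2$ with $\rho=\lambda_j/\lambda_i$ is increasing in $\rho\in[1,\infty)$ while $\lambda_j/\lambda_i\le\lambda_n/\lambda_1$, we obtain $c_{ij}\le c_{1n}$. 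Therefore $\max g=\tfrac14 c_{1n}$, attained at $t_1=t_n=\tfrac12$, completing the proof.

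The main obstacle is the middle step: first recognizing that the product must be reduced to the pure cross-term form $\sum_{i<j}c_{ij}t_it_j$, and then that the indefiniteness of the restricted $2\times2$ Hessian — the presence of an ascent direction whenever three eigenvalues are active — is governed \emph{exactly} by the square-root inequality (\ref{eqn:harmonic-variation-1}). Obtaining the sharp constant $\tfrac14$ hinges on this superadditivity: the naive termwise bound $c_{ij}\le c_{1n}$ is off by a factor of two, so the support-reduction carried by Lemma \ref{lem:harmonic-variation} is indispensable.
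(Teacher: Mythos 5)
Your proof is correct, and while it shares the paper's opening move --- rewriting the objective as $1+\sum_{i<j}c_{ij}t_it_j=1+\tfrac12 t^TCt$ and finishing with the two-point optimization at $t_1=t_n=\tfrac12$ --- the core support-reduction step is genuinely different. The paper fixes a maximizer $\bar t$ with some middle coordinate $\bar t_2>0$ and exhibits an explicit \emph{first-order} ascent direction $d=(\eta_1,-(\eta_1+\eta_n),0,\ldots,0,\eta_n)^T$ with $\eta_1=c_{2n}/c_{1n}$, $\eta_n=c_{12}/c_{1n}$, verifying componentwise that $Cd>0$ so that $\bar t^TCd>0$; this uses only the additive inequality (\ref{eqn:harmonic-variation-2}) (applied several times to various triples), needs just the one coordinate $\bar t_2$ to be positive, and pins the support directly to $\{1,n\}$. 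You instead run a \emph{second-order} argument: on a three-point support $\{a,b,c\}$ with all three masses positive, first-order optimality annihilates the linear term for the sign-symmetric feasible perturbations, and the restricted $2\times 2$ Hessian $-c_{ab}w_a^2-c_{bc}w_c^2+(c_{ac}-c_{ab}-c_{bc})w_aw_c$ is indefinite \emph{exactly} when $\sqrt{c_{ac}}>\sqrt{c_{ab}}+\sqrt{c_{bc}}$, i.e.\ the square-root inequality (\ref{eqn:harmonic-variation-1}); this yields a feasible $w$ with $w^TCw>0$ and hence a contradiction. Your route buys a conceptual dividend the paper's does not: it shows the sqrt form of Lemma \ref{lem:harmonic-variation} is precisely the indefiniteness criterion (the paper proves the sqrt version but then only consumes its squared consequence), and it avoids the componentwise bookkeeping for $Cd$. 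The price is that your reduction only shows the support has at most two points, not that it is $\{1,n\}$, so you need the extra (easy) monotonicity step $c_{ij}\le c_{1n}$ via $\rho\mapsto\rho+\rho^{-1}$; you also require all three support masses positive where the paper needs only one. Two small remarks: your handling of repeated eigenvalues by merging up front (valid since equal eigenvalues give $c_{ij}=0$ and identical rows, so $g$ depends only on grouped masses) mirrors the paper's grouping at the end; and your two-stage optimality argument could be compressed, since $g(t^*+\varepsilon w)+g(t^*-\varepsilon w)=2g(t^*)+\varepsilon^2w^TCw$ already contradicts maximality once $w^TCw>0$, without invoking the first-order condition separately.
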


\begin{proof}
Rewrite the product as
\begin{align}
\nonumber
& f(t_1,\ldots,t_n)\\
\nonumber
\triangleq & \left( \lambda_1 t_1+\ldots+\lambda_n t_n \right)\left( \lambda_1^{-1} t_1+\ldots+\lambda_n^{-1} t_n \right)\\
\nonumber
= & t_1^2+\ldots+t_n^2+\frac{\lambda_1}{\lambda_2}t_1 t_2+\ldots+\frac{\lambda_1}{\lambda_n}t_1 t_n+\frac{\lambda_2}{\lambda_1}t_2 t_1+\ldots+\frac{\lambda_2}{\lambda_n}t_2 t_n + \ldots\\
\nonumber
& \quad +\frac{\lambda_n}{\lambda_1}t_n t_1+\ldots+\frac{\lambda_n}{\lambda_{n-1}}t_{n} t_{n-1}\\
\nonumber
= & \left( t_1+t_2+\ldots+t_n \right)^2 + \left( \frac{\lambda_1}{\lambda_2} + \frac{\lambda_2}{\lambda_1} - 2 \right)t_1 t_2 +\ldots +
\left( \frac{\lambda_1}{\lambda_n} + \frac{\lambda_n}{\lambda_1} - 2 \right)t_1 t_n\\
\nonumber
& \quad + \left( \frac{\lambda_2}{\lambda_3} + \frac{\lambda_3}{\lambda_2} - 2 \right)t_2 t_3 +\ldots +
\left( \frac{\lambda_2}{\lambda_n} + \frac{\lambda_n}{\lambda_2} - 2 \right)t_2 t_n\\
\nonumber
& \quad + \ldots\\
\nonumber
& \quad + \left( \frac{\lambda_{n-1}}{\lambda_n} + \frac{\lambda_n}{\lambda_{n-1}} - 2 \right)t_{n-1} t_n\\
\nonumber
= & 1 + \left( \sqrt{\frac{\lambda_2}{\lambda_1}} - \sqrt{\frac{\lambda_2}{\lambda_1}} \right)^2 t_1 t_2 +\ldots +
\left( \sqrt{\frac{\lambda_n}{\lambda_1}} - \sqrt{\frac{\lambda_1}{\lambda_n}} \right)^2 t_1 t_n\\
\nonumber
& \quad + \left( \sqrt{\frac{\lambda_3}{\lambda_2}} - \sqrt{\frac{\lambda_2}{\lambda_3}} \right)^2 t_2 t_3 +\ldots +
\left( \sqrt{\frac{\lambda_n}{\lambda_2}} - \sqrt{\frac{\lambda_2}{\lambda_n}} \right)^2 t_2 t_n\\
\nonumber
& \quad + \ldots\\
\nonumber
& \quad + \left( \sqrt{\frac{\lambda_n}{\lambda_{n-1}}} - \sqrt{\frac{\lambda_{n-1}}{\lambda_n}} \right)^2 t_{n-1} t_n\\
\nonumber
= & 1 + c_{12}t_1 t_2 + \ldots + c_{n-1,n}t_{n-1} t_n\\
\label{eqn:harmonic-optimization-4}
= & 1 + \frac{1}{2}t^T C t,
\end{align}
where $t = (t_1,\ldots,t_n)^T$, and
\begin{equation}
C = \left(
  \begin{array}{ccccc}
    0 & c_{12} & c_{13} & \ldots & c_{1n} \\
    c_{12} & 0 & c_{23} & \ldots & c_{2n} \\
    c_{13} & \ddots & 0 & \ddots & \vdots \\
    \vdots & & \ddots & 0 & c_{n-1,n}\\
    c_{1n} & c_{2n} & \ldots & c_{n-1,n} & 0\\
  \end{array}
\right), \quad
c_{ij} = \left( \sqrt{\frac{\lambda_j}{\lambda_i}} - \sqrt{\frac{\lambda_i}{\lambda_j}} \right)^2. \nonumber
\end{equation}
Therefore (\ref{prob:harmonic-optimization-3}) is reduced further to the following quadratic programming
\begin{align}
\label{prob:harmonic-optimization-4}
\left\{
\begin{aligned}
\max_{t} & f(t) = 1 + \frac{1}{2}t^T C t\\
\mbox{s.t.} & \quad g(t) = e^T t - 1 = 0\\
 & \quad t_i \ge 0,
\end{aligned}
\right.
\end{align}
where $e = (1,\ldots,1)^T$.

If $n=2$, $f(t_1,t_2) = 1 + c_{12}t_1 t_2 = 1 + c_{12}t_1 (1-t_1)$ and the maximum is attained when $t_1 = t_2 = \frac{1}{2}$. Consider the case $n \ge 3$ in the sequel.

Assume that $0<\lambda_1 < \lambda_2 < \ldots < \lambda_n$. Since $\lambda_i \ne \lambda_j$, $c_{ij}>0$. Let $(\bar{t}_1,\ldots,\bar{t}_n)$ be a maximum point. It is to be shown that $\bar{t}_i=0$, $i=2,\ldots,n-1$. By contradition, suppose that $\bar{t}_2 > 0$. Construct a marching direction
\begin{align}
d = \left(\eta_1,-(\eta_1 + \eta_n),0,\ldots,0,\eta_n \right)^T,
\end{align}
where $\eta_1 = \frac{c_{2n}}{c_{1n}} > 0$ and $\eta_n = \frac{c_{12}}{c_{1n}} > 0$. Since $\bar{t}_2 > 0$, $t = \bar{t} + \alpha d$ will be a feasible point if $\alpha > 0$ is sufficiently small. For this direction, there holds $Cd > 0$ component-wise. In fact,
\begin{equation}
\left(
  \begin{array}{ccccc}
    0 & c_{12} & c_{13} & \ldots & c_{1n} \\
    c_{12} & 0 & c_{23} & \ldots & c_{2n} \\
    c_{13} & \ddots & 0 & \ddots & \vdots \\
    \vdots & & \ddots & 0 & c_{n-1,n}\\
    c_{1n} & c_{2n} & \ldots & c_{n-1,n} & 0\\
  \end{array}
\right)
\left(
  \begin{array}{c}
    \eta_1 \\
    -(\eta_1+\eta_n) \\
    0 \\
    \vdots\\
    0 \\
    \eta_n \\
  \end{array}
\right)
=
\left(
  \begin{array}{c}
    c_{1n}\eta_n - c_{12}(\eta_1+\eta_n) \\
    c_{12}\eta_1 + c_{2n}\eta_n \\
    c_{13}\eta_1 - c_{23}(\eta_1+\eta_n) + c_{3n}\eta_n \\
    \vdots\\
    c_{1,n-1}\eta_1 - c_{2,n-1}(\eta_1+\eta_n) + c_{n-1,n}\eta_n \\
    c_{1n}\eta_1 - c_{2n}(\eta_1+\eta_n) \\
  \end{array}
\right). \nonumber
\end{equation}
By Lemma \ref{lem:harmonic-variation},
\begin{align}
\nonumber
\eta_1 + \eta_n = \frac{c_{2n}}{c_{1n}} + \frac{c_{12}}{c_{1n}} < 1.
\end{align}
Therefore, for the first component
\begin{align}
\nonumber
c_{1n}\eta_n - c_{12}(\eta_1+\eta_n) = c_{12} - c_{12}(\eta_1+\eta_n) > 0.
\end{align}
Similarly for the last component
\begin{align}
\nonumber
c_{1n}\eta_1 - c_{2n}(\eta_1+\eta_n) > 0.
\end{align}
For the $i$-th component, $i=3,\ldots,n-1$, by Lemma \ref{lem:harmonic-variation} again,
\begin{align}
\nonumber
& c_{1i}\eta_1 - c_{2i}(\eta_1+\eta_n) + c_{in}\eta_n\\
\nonumber
= & \frac{1}{c_{1n}} \left( c_{1i}c_{2n} - c_{2i}(c_{2n}+c_{12}) + c_{in}c_{12} \right)\\
\nonumber
= & \frac{1}{c_{1n}} \left( (c_{1i} - c_{2i})c_{2n} - c_{2i}c_{12} + c_{in}c_{12} \right)\\
\nonumber
> & \frac{1}{c_{1n}} \left( c_{12}c_{2n} - c_{2i}c_{12} + c_{in}c_{12} \right)\\
\nonumber
> & \frac{1}{c_{1n}} \left( c_{in}c_{12} + c_{in}c_{12} \right)\\
\nonumber
> & 0.
\end{align}
Thus $Cd>0$ component-wise. As a result,
\begin{align}
\label{eqn:positive-direction-derivative}
\bar{t}^T Cd \ge \bar{t}_2 \left( c_{12}\eta_1 + c_{2n}\eta_n \right) > 0.
\end{align}
Along this direction $d$, if $\alpha > 0$ is sufficiently small, $t = \bar{t} + \alpha d$ is feasible and
\begin{align}
\nonumber
f(\bar{t}+\alpha d) & = 1 + \frac{1}{2}\left( \bar{t}+\alpha d \right)^T C \left( \bar{t}+\alpha d \right)\\
\nonumber
& = 1 + \frac{1}{2}\left( \bar{t}^T C t + 2\bar{t}^T Cd \alpha + d^TCd \alpha^2 \right)\\
\label{eqn:contradiction}
& > f(\bar{t}),
\end{align}
which is a contradition to the assumption that $\bar{t}$ is a maximum point. By similar arguments, it can be proved that $\bar{t}_i = 0$, $i = 3,\ldots,n-1$. Therefore $f(\bar{t}) = 1 + c_{1n} \bar{t}_1 \bar{t}_n$. Analogue to the case $n=2$, the maximum is reached when $\bar{t}_1 = \bar{t}_n = \frac{1}{2}$, and the maximum is just
\begin{align}
\nonumber
& \left( \frac{1}{2}\lambda_1 + 0\cdot \lambda_2 + \cdots + 0\cdot \lambda_{n-1} + \frac{1}{2}\lambda_n\right)
\left( \frac{1}{2}\frac{1}{\lambda_1} + 0\cdot \frac{1}{\lambda_2} + \cdots + 0\cdot \frac{1}{\lambda_{n-1}} + \frac{1}{2}\frac{1}{\lambda_n} \right)\\
= & \frac{1}{4}\left( \lambda_1+\lambda_n \right)\left( \frac{1}{\lambda_1}+\frac{1}{\lambda_n} \right).
\end{align}
Converted to the original problem (\ref{prob:harmonic-optimization-1}), $\bar{t}_1 = \bar{t}_n = \frac{1}{2}$ corresponds to
\begin{align}
v = \frac{1}{\sqrt{2}}\varphi_1 + \frac{1}{\sqrt{2}}\varphi_n.
\end{align}

If there are repeated eigenvalues, suppose the distinct eigenvalues are listed as $\tilde{\lambda}_1 < \ldots < \tilde{\lambda}_k$. The $t_i$'s can be separated into groups corresponding to distinct eigenvalues. For example, if $\tilde{\lambda}_1=\lambda_1=\lambda_2<\tilde{\lambda}_2=\lambda_3=\lambda_4<\ldots$. Then $t_1,t_2$ can be combined together as a new variable $\tilde{t}_1=t_1+t_2$ and $t_3,t_4$ combined as $\tilde{t}_2=t_3+t_4$. And the above argument applies.
\end{proof}

By Proposition \ref{proposi:harmonic-optimization}, the convergence factor of the steepest descent is
\begin{align}
Q_{sd} = \sqrt{1-\frac{4\lambda_1 \lambda_n}{\left( \lambda_1+\lambda_n \right)^2}}=\frac{\lambda_n - \lambda_1}{\lambda_n + \lambda_1},
\end{align}
which is the same as the convergence factor derived by Chebyshev polynomial in textbook. In fact, in textbook, the following factor $\bar{Q}_{sd}$ is taken as the convergence factor,
\begin{align}
\bar{Q}_{sd}^2 = \min_{\alpha}\max_{x_k}\frac{(x_k+\alpha r_k-x_*)^TA(x_k+\alpha r_k-x_*)}{(x_k-x_*)^TA(x_k-x_*)}.
\end{align}
Note that for a function of two arguments $S(\alpha,x)$,
\begin{align}
\max_{x}\min_{\alpha}S(\alpha,x) \le \min_{\alpha}\max_{x}S(\alpha,x).
\end{align}
Therefore $\bar{Q}_{sd}$ is an upper bound of $Q_{sd}$, i.e., $Q_{sd} \le \bar{Q}_{sd}$. It turns out that the minimum value $\bar{Q}_{sd}$ is also $\frac{\lambda_n - \lambda_1}{\lambda_n + \lambda_1}$, that is $Q_{sd} = \bar{Q}_{sd}$.

\subsection{Convergence rate of CG}
For CG, the marching direction is the conjugate direction, i.e. $d_k = p_k$, and the successive error $A$-norms satisfy
\begin{align}
\label{eqn:successive-error-A-norm-CG}
\|x_{k+1}-x_*\|_A^2 = \|x_{k}-x_*\|_A^2 - \frac{\left(r_k^Tp_k\right)^2}{p_k^TAp_k} = \|x_{k}-x_*\|_A^2 \left( 1 - \frac{\left(r_k^Tp_k\right)^2}{p_k^TAp_k\cdot r_k^TA^{-1}r_k} \right).
\end{align}
Note that for CG, $x_{k+1}$ is not determined by $x_k$ only. Rather, it is determined by both $x_k$ and $p_k$, and the convergence factor depends on $r_k$ and $p_k$. Therefore define
\begin{align}
Q_{cg}(r_k,p_k)\triangleq \frac{\|x_{k+1}-x_*\|_A}{\|x_{k}-x_*\|_A}.
\end{align}
The convergence factor of two-term ratio for CG is defined as
\begin{align}
Q_{cg}=\sup_{r_k,p_k}Q_{cg}(r_k,p_k).
\end{align}
By (\ref{eqn:successive-error-A-norm-CG}), the following quantity should be maximized,
\begin{align}
\frac{p_k^TAp_k\cdot r_k^TA^{-1}r_k}{\left(r_k^Tp_k\right)^2}.
\end{align}
By the properties of the iterates of CG, $r_k^Tp_k = r_k^Tr_k$, and $(p_k-r_k)^TAp_k = 0$. Therefore
\begin{align}
\nonumber
p_k^TAp_k & = r_k^TAr_k+2(p_k-r_k)^TAr_k+(p_k-r_k)^TA(p_k-r_k)\\
\nonumber
& = r_k^TAr_k-2(p_k-r_k)^TA(p_k-r_k)+(p_k-r_k)^TA(p_k-r_k)\\
\nonumber
& = r_k^TAr_k-(p_k-r_k)^TA(p_k-r_k).
\end{align}
Replacing $r_k^Tp_k$ by $r_k^Tr_k$, gives
\begin{align}
\nonumber
\frac{p_k^TAp_k\cdot r_k^TA^{-1}r_k}{\left(r_k^Tp_k\right)^2} & =
\left(\frac{r_k^TAr_k-(p_k-r_k)^TA(p_k-r_k)}{r_k^Tr_k}\right)\left(\frac{r_k^TA^{-1}r_k}{r_k^Tr_k}\right)\\
\nonumber
& \le \left(\frac{r_k^TAr_k}{r_k^Tr_k}\right)\left(\frac{r_k^TA^{-1}r_k}{r_k^Tr_k}\right)\\
\nonumber
& \le \frac{1}{4}\left( \lambda_1+\lambda_n \right)\left( \frac{1}{\lambda_1}+\frac{1}{\lambda_n} \right).
\end{align}
Note that the maximum value of $Q_{cg}(r_k,p_k)$ may not exist, but $(p_k-r_k)^TA(p_k-r_k)$ may be arbitrarily small. So
\begin{align}
Q_{cg}=\sup_{r_k,p_k}Q_{cg}(r_k,p_k)=\sqrt{1-\frac{4\lambda_1 \lambda_n}{\left( \lambda_1+\lambda_n \right)^2}}=\frac{\lambda_n - \lambda_1}{\lambda_n + \lambda_1}.
\end{align}
In this sense the convergence factor $Q_{cg}$ of two-term ratio for CG is the same as $Q_{sd}$ for the steepest descent.

At first glance, this convergence factor $Q_{cg}$ may seem too large for CG, because in textbook, the convergence estimate is
\begin{align}
\label{eqn:convergence-estimate-CG}
\|x_{k}-x_*\|_A \le 2\left( \frac{\sqrt{\lambda_n}-\sqrt{\lambda_1}}{\sqrt{\lambda_n}+\sqrt{\lambda_1}} \right)^k \|x_{0}-x_*\|_A.
\end{align}
However, note that the quantity
$\frac{\sqrt{\lambda_n}-\sqrt{\lambda_1}}{\sqrt{\lambda_n}+\sqrt{\lambda_1}}$
should be considered as the average convergence factor of the $k$-term ratio, and indivisual two-term ratio may exceed this average convergence factor, as the numerical experiment shows, see Table \ref{tab1:ratio}. Here the matrix $A$ is randomly chosen. 'ratio-2' represents $\|x_{k}-x_*\|_A/\|x_{k-1}-x_*\|_A$ and 'ratio-k' represents $\sqrt[k]{\|x_{k}-x_*\|_A/\|x_{0}-x_*\|_A}$. For more theories on the convergence rate of CG, see \cite{convergencerate-VanderSluis,convergencerate-Sleijpen,convergencerate-Greenbaum}.

\begin{table}[!h]
\renewcommand{\arraystretch}{1.10}
\tabcolsep 0pt \caption{Two-term ratios and k-term mean ratios.}
\vspace*{-12pt} \label{tab1:ratio}
\begin{center}
\def\temptablewidth{0.8\textwidth}
{\rule{\temptablewidth}{1pt}}
\begin{tabular*}{\temptablewidth}{@{\extracolsep{\fill}}cccccccc}
$k$ & $\frac{\lambda_n-\lambda_1}{\lambda_n+\lambda_1}$ & ratio-2 & ratio-k & $\frac{\sqrt{\lambda_n}-\sqrt{\lambda_1}}{\sqrt{\lambda_n}+\sqrt{\lambda_1}}$ &
$k$ & ratio-2 & ratio-k \\
\cline{1-5}\cline{6-8}
& & & & & & &\\
1 &   &   0.996 &  0.996 &   & 14  &   0.964 &  0.958\\
2 &   &   0.998 &  0.997 &   & 15  &   0.906 &  0.954\\
3 &   &   0.996 &  0.997 &   & 16  &   0.747 &  0.940\\
4 &   &   0.989 &  0.995 &   & 17  &   0.810 &  0.932\\
5 &   &   0.992 &  0.994 &   & 18  &   0.746 &  0.920\\
6 &   &   0.989 &  0.993 &   & 19  &   0.854 &  0.917\\
7 &  0.999 &   0.985 &  0.992 &  0.969 & 20  &   0.996 &  0.921\\
8 &   &   0.964 &  0.989 &   & 21  &   0.001 &  0.677\\
9 &   &   0.946 &  0.984 &   & 22  &   0.000 &  0.474\\
10 &   &   0.908 &  0.976 &   & 23  &   0.100 &  0.443\\
11 &   &   0.940 &  0.973 &   & 24  &   0.028 &  0.395\\
12 &   &   0.845 &  0.961 &   & 25  &   0.055 &  0.365\\
13 &   &   0.913 &  0.958 &
\end{tabular*}
{\rule{\temptablewidth}{1pt}}
\end{center}
\end{table}

%\subsection{Preconditoners}

\section{CG in Hilbert space}
One of the origins of equation (\ref{eqn:Axb}) is discretization of second order self-adjoint differential equation, for example, the Poisson equation with homogeneous Dirichlet boundary condition,
\begin{align}
\label{eqn:poisson-problem}
\left\{
\begin{aligned}
-\Delta u + cu= f,\quad \Omega\\
u = 0,\quad \partial\Omega.
\end{aligned}
\right.
\end{align}
A natural question is that whether the CG method can be applied to the self-adjoint operator equation directly, before its discretization. The answer is yes, see \cite{CG-hilbert-Hayes,CG-hilbert-Herzog-Sachs}. But the function spaces and operators involved should be chosen thoughtfully. Problem (\ref{eqn:poisson-problem}) is taken to demonstrate the idea of construction of the CG method in function space, and to illustrate the relationship between the finite dimensial CG and the infinite dimensional CG.

The above elliptic problem can be rewritten as an operator equation in certain sense
\begin{align}
\mathscr{A} u = f.
\end{align}
Recall that in the CG algorithm, operations such as $Ax$ and $A^2x$ will be implicitly involved. If the CG is generalized to the operator equation (\ref{eqn:poisson-operator}), analogously $\mathscr{A}u$ and $\mathscr{A}^2u$ should be well defined. Therefore the domain space and the range space of $\mathscr{A}$ should be the same. If $\mathscr{A}$ is chosen as $-\Delta + c$, and the domain space of $\mathscr{A}$ chosen as $C^2_0(\Omega)$, $\mathscr{A}u$ is well defined but $\mathscr{A}^2u$ may not be well defined. Furthermore, $\mathscr{A}u$ may not lie in $C^2_0(\Omega)$. If the domain space of $\mathscr{A}$ is chosen as $L^2(\Omega)$, $\mathscr{A}$ is not defined in the whole domain space.

In PDE community, problem (\ref{eqn:poisson-problem}) usually is understood in weak sense, that is, find $u\in H_0^1(\Omega)$, such that
\begin{align}
\label{eqn:poisson-weak-sense}
\int_{\Omega}\nabla u \nabla v dx + c\int_{\Omega}u v dx = \int_{\Omega}fvdx,\quad \forall v\in H_0^1(\Omega).
\end{align}
In this sense, problem (\ref{eqn:poisson-problem}) can be firstly considered as
\begin{align}
\label{eqn:poisson-weak-operator}
\mathscr{L} u = f,\quad \mathscr{L}:X\to X^*,
\end{align}
where $X=H_0^1(\Omega)$, with inner product
\begin{align}
(u,v)_{X}=\int_{\Omega}\nabla u \nabla v dx + \int_{\Omega}u v dx.
\end{align}
In order to generalize the CG method, the image of the operator $\mathscr{L}$ should be pulled back to the domain space $X$. Rietz isomorphism  can do this job. Then, problem (\ref{eqn:poisson-problem}) is rewritten as the following operator equation
\begin{align}
\label{eqn:poisson-operator}
\mathscr{A}u\triangleq\mathscr{RL} u = \mathscr{R}f,
\end{align}
where $\mathscr{R}$ is the Rietz isomorphism $\mathscr{R}:X^*\to X$, i.e., $(\mathscr{R}f,v)_{X}=\langle f, v \rangle_{X^*,X}$.

On the other point of view, the solution $u$ of problem (\ref{eqn:poisson-weak-sense}) can be considered as the minimizer of the following quadratic functional
\begin{align}
\mathscr{J}(u)=\frac{1}{2}\langle \mathscr{L} u, u \rangle_{X^*,X} - \langle f, u \rangle_{X^*,X},\quad \mathscr{J}:X\to \mathbb{R}
\end{align}
and $\mathscr{L} u - f = 0$ corresponds to $\mathscr{J}'(u)=0$, similar to that $Ax - b = 0$ corresponds to $J'(u)=0$ in Section \ref{set:connections}. Here $\mathscr{J}'(u)\in X^*$ is the Fr\'{e}chlet derivative of $\mathscr{J}(u)$. Consider the steepest descend direction of $\mathscr{J}$ at $u$. Note the steepest descend direction of a function or functional is the minus gradient of that function or functional. It is important to distinguish the two notions of Fr\'{e}chlet derivative and gradient of a functional. For this example, the Fr\'{e}chlet derivative of $\mathscr{J}$ at $u$ is $\mathscr{J}'(u)\in X^*$ and the gradient is $\nabla\mathscr{J}(u)\in X$. They are related by $\nabla\mathscr{J}(u)=\mathscr{R}\mathscr{J}'(u)$. In finite dimensional Euclidian space $\mathbb{R}^n$, $\nabla J(u)=(J'(u))^T$, that is if $\mathbb{R}^n$ is considered as a column vector space, the Fr\'{e}chlet derivative of $J$ at $u$ is identified as a row vector, the gradient of $J$ is a column vector and the Riesz isomorphism is the operation of transpose.

After clarifying the setting and the notions, the CG method for $\mathscr{L} u - f = 0$ in function space can be derived. Suppose that $u_0$ is an initial guess. The residual $r_0=f-\mathscr{L} u_0=-\mathscr{J}'(u)\in X^*$. The first search direction is taken as the steepest descend direction $p_0=-\nabla \mathscr{J}'(u_0)=\mathscr{R}r_0\in X$. Note that $\langle \mathscr{L}u, p \rangle_{X^*,X}=\langle \mathscr{L}p, u \rangle_{X^*,X}$. Minimizing $\mathscr{J}(u_0+\alpha p_0)$ with respect to $\alpha$ gives
\begin{align}
\alpha_0 = \frac{\langle f-\mathscr{L}u_0, p_0 \rangle_{X^*,X}}{\langle \mathscr{L}p_0, p_0 \rangle_{X^*,X}} = \frac{\langle r_0, p_0 \rangle_{X^*,X}}{\langle \mathscr{L}p_0, p_0 \rangle_{X^*,X}}.
\end{align}
In passing,
\begin{align}
u_1 = u_0 + \alpha_0 p_0,\\
r_1 = r_0 - \alpha_0 \mathscr{L}p_0.
\end{align}
Note that $r_1\in X^*$. With $r_1$ and $p_0$ at hand, a direction $p_1$ conjugate to $p_0$ is constructed as
\begin{align}
p_1 = \mathscr{R}r_1+\beta_0 p_0,\quad \beta_0=-\frac{\langle \mathscr{L}p_0, \mathscr{R}r_1 \rangle_{X^*,X}}{\langle \mathscr{L}p_0, p_0 \rangle_{X^*,X}}.
\end{align}
Repeating these steps, the CG method in function space is resulted
\begin{align}
\label{eqn:cg-infinite-pk}
p_k & = \mathscr{R}r_k+\beta_{k-1} p_{k-1},\quad \beta_{k-1}=-\frac{\langle \mathscr{L}p_{k-1}, \mathscr{R}r_k \rangle_{X^*,X}}{\langle \mathscr{L}p_{k-1}, p_{k-1} \rangle_{X^*,X}},\\
\label{eqn:cg-infinite-uk}
u_{k+1} & = u_k + \alpha_k p_k,\quad \alpha_k = \frac{\langle r_k, p_k \rangle_{X^*,X}}{\langle \mathscr{L}p_k, p_k \rangle_{X^*,X}},\\
\label{eqn:cg-infinite-rk}
r_{k+1} & = r_k - \alpha_k \mathscr{L}p_k.
\end{align}

Another natural question arises. If (\ref{eqn:poisson-weak-operator}) is discretized with discretized algebraic equations $AU = F$, and the finite dimensional CG is applied to $AU = F$, then is there any connection between the infinite dimensional CG iterates for (\ref{eqn:poisson-weak-operator}) and the finite dimensional CG iterates for $AU = F$?

To be specific, consider the finite element discretization for (\ref{eqn:poisson-weak-sense}): Find $u_h = \sum\limits_{j=1}^{n}\eta_j\varphi_j\in X_h$, s.t.,
\begin{align}
\label{eqn:poisson-fem}
\int_{\Omega}\nabla u_h \nabla v_h dx + c\int_{\Omega}u_h v_h dx = \int_{\Omega}fv_h dx,\quad \forall v_h\in X_h,
\end{align}
or by short hand notation,
\begin{align}
a( u_h,v_h ) = (f,v_h),\quad \forall v_h\in X_h,
\end{align}
which can be considered as a discretized analog of the operator equation $\mathscr{L} u = f$ in (\ref{eqn:poisson-weak-operator})
\begin{align}
\label{eqn:poisson-weak-operator-fem}
\mathscr{L}_h u_h = f_h,\quad \mathscr{L}_h:X_h\to X_h^*,
\end{align}
where $f_h$ is the projection of $f$ in $X_h^*$. Therefore (\ref{eqn:poisson-fem}) can be rewritten as
\begin{align}
\langle \mathscr{L}_h u_h, v_h \rangle_{X_h^*,X_h} = \langle f_h, v_h \rangle_{X_h^*,X_h},\quad \mathscr{L}_h:X_h\to X_h^*,
\end{align}
Here $X_h$ is a finite element subspace of $X$ spaned by piece-wise linear basis functions $\varphi_1,\ldots,\varphi_n$, with inner product inherited from $X$,
\begin{align}
\left( u_h,v_h \right)_{X_h} = \int_{\Omega}\nabla u_h \nabla v_h dx + \int_{\Omega}u_h v_h dx.
\end{align}
In the following, $\langle \cdot, \cdot \rangle = \langle \cdot, \cdot \rangle_{X_h^*,X_h}$ will denote the dual pair and $\left( \cdot, \cdot \right)_{X_h}$ will denote the inner product.

It is worthwhile to note that usually it is not (\ref{eqn:poisson-weak-operator-fem}) that is solved by CG, rather, it is the representation of (\ref{eqn:poisson-weak-operator-fem}) that is solved by CG. Such representation can be derived by choosing $v_h=\varphi_j$ in (\ref{eqn:poisson-fem}), and is denoted as $AU = F$. Here $U=(\eta_1,\ldots,\eta_n)^T$ is the coefficients of the finite element basis functions $\varphi_i$, the elements $a_{ij}$ of the matrix $A$ and elments $F_i$ of $F$ are
\begin{align}
\label{eqn:aij}
a_{ij} & = \int_{\Omega}\nabla \varphi_j \nabla \varphi_i dx + c\int_{\Omega}\varphi_j \varphi_i dx,\\
F_i & =  \int_{\Omega}f\varphi_i dx.
\end{align}
In the finite element method, usually $A$ is formulated as $A=K+cM$, where $K$ is the stiff matrix and $M$ is the mass matrix. In this example, the matrix $A$ is symmetric positive definite. Define $Q_1:\mathbb{R}^n\to X_h$ and $Q_2:X_h^*\to \mathbb{R}^n$ as,
\begin{align}
Q_1
\left(
  \begin{array}{c}
    \eta_1 \\
    \eta_2 \\
    \vdots \\
    \eta_n
  \end{array}
\right)
= \sum_{j=1}^{n}\eta_i \varphi_i,
\quad
Q_2 r_h
=
\left(
  \begin{array}{c}
    \langle r_h,\varphi_1 \rangle \\
    \langle r_h,\varphi_2 \rangle \\
    \vdots \\
    \langle r_h,\varphi_n \rangle
  \end{array}
\right).
\end{align}
Note that $Q_2 = Q_1^*$ is the Hilbert conjugate of $Q_1$, i.e.,
\begin{align}
\langle r_h,Q_1 U \rangle = \langle Q_1^*r_h,U \rangle_{\mathbb{R}^n} = \langle Q_2r_h,U \rangle_{\mathbb{R}^n},\quad \forall U\in \mathbb{R}^n, r_h\in X_h^*.
\end{align}

In the language of mapping, with the base $\varphi_1,\ldots,\varphi_n$, the representation of $\mathscr{L}_h$ is $A=Q_2\mathscr{L}_h Q_1=Q_1^*\mathscr{L}_h Q_1$, with $Q_1U=u_h$ and $Q_2f_h=F$. That is the following diagram commutes.
\begin{align}
\xymatrix{
X_h \ar[rr]^{\mathscr{L}_h} & & X_h^* \ar[d]^{Q_1^*} \\
\mathbb{R}^n\ar[u]^{Q_1}\ar[rr]^{A} & & \mathbb{R}^n}
\end{align}
Let $w_h = \sum\limits_{j=1}^{n}w_i\varphi_i$ and $v_h = \sum\limits_{j=1}^{n}v_i\varphi_i$, then $\left( \mathscr{L}_h w_h,v_h \right)_{X_h}$ can be represented as
\begin{align}
w^T(K+cM)v = w^TAv,
\end{align}
where $w = (w_1,\ldots,w_n)^T,v = (v_1,\ldots,v_n)^T$.
In addition, the discrete analog $\mathscr{R}_h$ of the Riesz isomorphism $\mathscr{R}$ should be introduced, $\mathscr{R}_h:X_h^*\to X_h$. Let $r_h\in X_h^*$ and $w_h=\mathscr{R}_h r_h \in X_h$. The representation of $\mathscr{R}_h$ can be derived as follows. Suppose $w_h = \sum\limits_{j=1}^{n}w_i\varphi_i$ and denote $r_i = \langle r_h,\varphi_i \rangle$. By the definition of Riesz isomorphism,
\begin{align}
r_i = \langle r_h,\varphi_i \rangle = \left( \mathscr{R}_hr_h,\varphi_i \right)_{X_h} = \left( w_h,\varphi_i \right)_{X_h} = \sum_{j=1}^{n}\left( \varphi_j,\varphi_i \right)_{X_h}w_j.
\end{align}
In matrix-vector format,
$$
r = (K+M)w.
$$
Thus
$$
w = Rr,
$$
where $R=(K+M)^{-1}$. So $R$ is a representation of $\mathscr{R}_h$. Note that if $v_h = \sum\limits_{j=1}^{n}v_i\varphi_i$, then the inner product $\left( w_h,v_h \right)_{X_h}$ can be represented as
\begin{align}
w^T(K+M)v = w^TR^{-1}v,
\end{align}
and the dual pair $\langle r_h,v_h \rangle$ can be represented as $r^Tv$.

With the above preparation, We are ready to compare the CG iterates for both $AU=F$ and $\mathscr{L}_h u_h = f_h$. In the following, the CG iterates for $AU=F$ will be denoted as $U_k, r_k, p_k, \alpha_k, \beta_k$, the CG iterates for $\mathscr{L}_h u_h = f_h$ will be denoted as $u_h^k, r_h^k, p_h^k, \alpha^k, \beta^k$, and the representation of $u_h^k, r_h^k, p_h^k$ as $U^k, r^k, p^k$. The comparisons are sumerized in Table \ref{tab1:comparison-iterates-of-cg}. Taking a closer look at formulas for $p^k, \alpha^k, \beta^k$, we find that the representive CG iterates for $\mathscr{L}_h u_h = f_h$ is a preconditioned conjugate gradient method (PCG) for $AU=F$ with the preconditioner $R$, which is the discretized Riesz isomorphism. The CG for the discretized operator equation $\mathscr{L}_h u_h = f_h$ can be viewed as an inexact CG in function space applied to the operator equation $\mathscr{L} u = f$, just as the relationship between the finite dimensional Newton method and the infinite dimensional Newton method for nonlinear differential equations, see \cite{newtonmethod-Deuflhard}. In this sense, any PCG for $AU=F$ can be considered as an inexact infinite dimensional CG for the operator equation $\mathscr{L} u = f$.

\begin{table}[!h]
\renewcommand{\arraystretch}{1.10}
\tabcolsep 0pt \caption{Comparison of the CG iterates.}
\vspace*{-12pt} \label{tab1:comparison-iterates-of-cg}
\begin{center}
\def\temptablewidth{1.0\textwidth}
{\rule{\temptablewidth}{1pt}}
\begin{tabular*}{\temptablewidth}{@{\extracolsep{\fill}}ccccc}
$AU=F$ & & $\mathscr{L}_h u_h = f_h$ & & Representation of iterates\\
\cline{1-5}
\\
$U_0$ & & $u_h^0=Q_1U_0$ & & $u^0=U_0$
\\
\\
$r_0 = F-AU_0 = Q_2 r_h^0$ &  & $r_{h}^0=f_h-\mathscr{L}_{h} u_{h}^0$ & &$r^0 = Q_2 r_h^0 = r_0$
\\
\\$p_0 = r_0$ &  & $p_{h}^0=\mathscr{R}_h r_h^0$ & & $p^0=R r^0$
\\
\\
$\alpha_0=\frac{r_0^Tp_0}{p_0^TAp_0}$ & & $\alpha^0=\frac{\langle r_h^0, p_h^{0} \rangle}{\langle \mathscr{L}_h p_h^{0},p_h^{0} \rangle}$ & & $\alpha^0=\frac{(r^0)^T p^{0} }{(p^0)^TAp^0}$
\\
\\
$U_{1}=U_0+\alpha_0 p_0$ & & $u_h^{1}=u_h^0+\alpha^0 p_h^0$ & & $u^{1}=u^0+\alpha^0 p^0$
\\
\\
$r_{1}=r_0-\alpha_0 A p_0$ & & $r_h^{1}=r_h^0-\alpha^0 \mathscr{L}_h p_h^0$ & & $r^{1}=r^0-\alpha^0 A p^0$
\\
\\
$\beta_{k-1}=-\frac{p_{k-1}^TAr_k}{p_{k-1}^TAp_{k-1}}$ & & $\beta^{k-1}=-\frac{\langle \mathscr{L}_h p_h^{k-1},\mathscr{R}_h r_h^{k} \rangle}{\langle \mathscr{L}_h p_h^{k-1},p_h^{k-1} \rangle}$ & & $\beta^{k-1}=-\frac{(p^{k-1})^T A (Rr^{k}) }{(p^{k-1})^T A p^{k-1}}$
\\
\\
$p_k=r_k+\beta_{k-1} p_{k-1}$ & & $p_{h}^k=\mathscr{R}_{h}r_{h}^k+\beta^{k-1} p_{h}^{k-1}$ & & $p^k=R r^k+\beta^{k-1} p^{k-1}$
\\
\\
$\alpha_k=\frac{r_k^Tp_k}{p_k^TAp_k}$ & & $\alpha^k=\frac{\langle r_h^k, p_h^{k} \rangle}{\langle \mathscr{L}_h p_h^{k},p_h^{k} \rangle}$ & & $\alpha^k=\frac{(Rr^k)^Tp^k}{(p^k)^TAp^k}$
\\
\\
$U_{k+1}=U_k+\alpha_k p_k$ & & $u_h^{k+1}=u_h^k+\alpha^k p_h^k$ & & $U^{k+1}=U^k+\alpha^k p^k$
\\
\\
$r_{k+1}=r_k-\alpha_kAp_k$ & & $r_h^{k+1}=r_h^k-\alpha^k \mathscr{L}_h p_h^k$ & & $r^{k+1}=r^k-\alpha^k A p^k$
\\
\\
\end{tabular*}
{\rule{\temptablewidth}{1pt}}
\end{center}
\end{table}

\section{Concluding remarks}
\label{sec6:conclusions}

CG is one of connection nodes of computational mathematics. It connects to the conjugate direction method, the subspace optimization method and the BFGS quasi-Newton method in numerical optimization, connects to the Lanczos method in  numerical linear algebra, connects to the orthogonal polynomials in numerical approximation theory, and connects to PCG in numerical PDEs. It is full of mathematical ideas and novel computational techniques. Maybe there are still undiscovered connections or merits inside CG.

%\section*{Acknowledgments}

\end{document}